\newcommand{\lbl}[1]{\label{#1}}
\def\eps{\varepsilon}
\def\d{\mathrm{d}}
\def\mbf{\mathbf}
\def\div{\mathrm{div}}
\newcommand\aint{{\int\!\!\!\!\!\!-}}
\title{Modified Wenzel and Cassie equations for wetting on rough surfaces\footnotemark[1]}%
\author{Xianmin Xu\footnotemark[2]
        }
\begin{document}
\maketitle

\renewcommand{\thefootnote}{\fnsymbol{footnote}}

\footnotetext[1]{This publication was based on
work supported  by Chinese NSFC project 11571354 and by SRF for ROCS, SEM.}
\footnotetext[2]{LSEC,
Institute of Computational Mathematics and Scientific/Engineering Computing, AMSS, NCMIS, Chinese Academy of Sciences, Beijing 100190, China.
({\tt xmxu@lsec.cc.ac.cn}).}

\renewcommand{\thefootnote}{\arabic{footnote}}
\begin{abstract}
We study a stationary wetting problem on rough and inhomogeneous solid surfaces. 
We derive a new formula for the apparent contact angle by asymptotic two-scale homogenization method. 
The formula reduces to a modified Wenzel equation for  geometrically rough surfaces and
a modified Cassie equation for chemically inhomogeneous surfaces. Unlike the classical Wenzel and
Cassie equations, the modified equations correspond to local minimizers of the total interface energy in the solid-liquid-air system, so that they are consistent with experimental observations.  
The homogenization results are proved rigorously by a variational method.
\end{abstract}

\begin{keywords}
Wenzel equation, Cassie equation, wetting, rough surface, homogenization
\end{keywords}

\begin{AMS}
41A60,49Q05,76T10
\end{AMS}

\pagestyle{myheadings} \thispagestyle{plain} \markboth{X. XU}
 {Modified Wenzel and Cassie equations for wetting on rough surfaces}
\section{Introduction}
Wetting describes how liquid states and spreads on solid surfaces. It is a common phenomenon in nature and our daily life. It appears 
also in many industrial processes, such as painting, printing, chemical and petroleum industry, etc. 
Recently, wetting has arisen much interests in physics,
chemistry and material sciences(see \cite{Gennes85,Gennes03,Bonn09,Quere08} among many others).

The wetting property of a solid surface is mainly characterized by a contact angle, i.e. the angle between the liquid-air interface and the solid surface. On a planar and chemically
homogeneous surface, the static contact angle $\theta_Y$ is  given by  Young's equation\cite{Young1805}:
\begin{equation*}
\gamma_{LG}\cos\theta_Y=\gamma_{SG}-\gamma_{SL},
\end{equation*}
where $\gamma_{LG}$, $\gamma_{SG}$ and $\gamma_{SL}$ are surface tensions 
of the liquid-air, solid-air and solid-liquid interfaces, respectively.

One key problem in wetting is to study  how the roughness and
chemical inhomogeniety of a solid surface affect the apparent contact angle,
namely the macroscopic angle between the liquid-air interface and 
the solid surface. To answer this question, there are two very well-known equations, 
 the Wenzel
equation\cite{Wenzel36} and the Cassie equation\cite{Cassie44}.
The Wenzel equation characterizes the apparent contact angle $\theta_W$ on a geometrically 
rough surface(see the left subfigure in Fig.~\ref{fig:1}): 
\begin{equation}
\cos\theta_W=r\cos\theta_Y,
\end{equation}
where $\theta_Y$ is  Young's angle, and $r$ is a roughness factor, which is the
ratio of the area of the rough surface to the  area of the effective planar surface. 
On the other hand, the Cassie equation characterizes the apparent contact angle $\theta_C$ on a chemically patterned
surface(see the right subfigure in Fig.~\ref{fig:1}):
\begin{equation}
\cos\theta_C=\lambda\cos\theta_{Y1}+(1-\lambda)\cos\theta_{Y2},
\end{equation}
where $\theta_{Y1}$ and $\theta_{Y2}$ are respectively  Young's angles of
the two materials, $\lambda$ is the area fraction of material 1 on the surface.
Although the two equations are widely used, their validity
has been challenged recently\cite{Gao07,Bormashenko09,Marmur09,erbil2014debate}. The main problem is that the two equations are seldom consistent with experimental 
observations that the apparent contact angle is not unique on a rough surface. 
This phenomenon is very important in industrial applications and often referred to contact angle hysteresis(CAH).
Obviously, the classical Wenzel and Cassie equations can not describe CAH phenomena \cite{Johnson1964,Extrand02,Schwarts1985,XuWang2011}.

  \begin{figure}[htb!]
\vspace*{-2mm}
    \centering
  \resizebox{!}{3.cm}
    {\includegraphics[height=2cm]{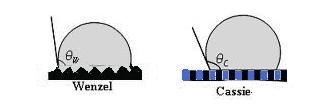}}
    \vspace*{-5mm}
    \caption{The contact angle on geometrically and chemically rough surfaces.}    \lbl{fig:1}
 \end{figure}

Mathematically, the classical Wenzel  and Cassie equations can be
proved to hold when considering the global minimizers of  the
total interface energy in a liquid-air-solid system\cite{Alberti05,Whyman08,xu-wang10,chenWangXu2013,caffarelli2007capillary1}. 
It is also known that the two equations
may fail when the system admits only local minimizers\cite{Gao07,XuWang2011},
that really happens in CAH phenomena. 
To characterize the CAH quantitatively, some modified forms of the Cassie equation have been 
developed for chemically patterned surfaces\cite{Choi09,Raj12,XuWang2013}. 
The modified formulas are found to be consistent with experiments very well\cite{Priest2013,Raj12}. 
The work has been extended to a special geometrically rough surface in \cite{Xu2016}.
However, these formulas  for local minimizers have not been proved rigorously. Furthermore, there 
is no result in literature for general geometrically and chemically rough surfaces.



In this paper, we consider a wetting problem on a general rough surface, which
includes both geometrical  roughness and chemical  inhomogeneity. 
The static liquid-air interface is described by a minimal surface equation
coupled with a contact angle condition.
By the asymptotic two-scale homogenization method, we derive a new formula for the apparent contact angle 
of the liquid-air interface. The formula shows that both the geometric
and chemical properties must be averaged on a contact line to give the correct value for
an apparent contact angle. The formula can be reduced to  a
 modified Wenzel equation for  geometrically rough  surfaces and a modified Cassie equation
for chemically inhomogeneous surfaces. In comparison with the classical Wenzel and Cassie equations,
the two modified equations correspond to local minimizers of the system and thus are 
consistent with  experimental observations. Furthermore, we prove
the homogenization results rigorously by a variational approach.  More precisely,
we proved that the difference between the liquid-gas interface and
the homogenized surface is of order $O(\varepsilon)$, where $\varepsilon$ is
a small parameter of the roughness. The key idea of the proof is to construct
an auxiliary energy minimizing problem and to estimate the energy of the minimizer 
directly. We also discuss how to use the modified Wenzel and Cassie equations
 to understand  contact angle hysteresis phenomena.


The structure of the paper is as follows. In section 2, we describe a simple model 
problem of wetting. In section 3, we do homogenization analysis for 
the model problem to derive a formula for the apparent contact angle. 
In section 4, we discuss the physical meaning of the formula in various 
situations to give the modified Wenzel and Cassie equations. In 
section 5, we prove the homogenization results rigorously. In section 6,
we give some general discussions on how to use the formula in practice. 
Finally, some conclusion remarks are given in section 7.

\section{The mathematical problem}
When a vertical solid wall is inserted into a liquid reservoir, the liquid-air interface will
rise or descend along the  wall due to the wetting property of the solid surface. 
Such a system can be used to study the wetting phenomena on rough surfaces\cite{Johnson1964}.
A simplified mathematical model for this problem is described as follows.

As in Fig.~\ref{fig:2}, let the  solid surface $S^\eps$ be given by
\begin{equation}
S^\eps:=\{ x=h_{\eps}(y,z)=\eps h(y/\eps,z/\eps) | y\in (0,a), z\in(-M,M)\}, \label{e:roughsurf}
\end{equation}
where $h(Y,Z)$ is a periodic function with respect to $Y$ and $Z$ with period 1,
$a>0$ is a constant, and $M>0$ is  large enough. Here  $\eps\ll1$
is a small parameter to characterize the roughness of the surface.
In addition, we also assume that the solid surface is chemically inhomogeneous in the sense that the (static) Young's angle on the surface is  given by 
$\theta_s(y/\eps,z/\eps)$ with $\theta_s(Y,Z)$ being period in  $Y$ and $Z$ with period 1.

Suppose the liquid-gas interface $\Gamma^{\eps}$  is given by
  \begin{equation}
 \Gamma^\eps:=\{ z=u_{\eps}(x,y) | x\in(0,1),y\in(0,a)\}, \label{e:liquidsurf}
  \end{equation}
  such that  $|u_{\eps}|<M$ and 
 \begin{equation}u_{\eps}(1,y)=0.\label{e:bnd}
  \end{equation}
 We  assume that $u_{\eps}(x,y)$ is periodic in $y$ with period $\eps$. 
 Furthermore, we denote the contact line, i.e. the intersection of $S^\eps$ and $\Gamma^\eps$, as
 \begin{equation}
 L^{\eps}: \qquad
  \left\{
 \begin{array}{l}
 x=\phi_\eps(y), \\
 z=\psi_\eps(y).
 \end{array}
 \right.
 \end{equation}
 It is easy to see that $\phi_\eps$ and $\psi_\eps$ are  determined by $h_\eps$ and $u_\eps$, and  
 periodic with respect to $y$ with period $\eps$.
  \begin{figure}[htb!]
\vspace*{-5mm}
    \centering
  \resizebox{!}{7.5cm}
    {\includegraphics[height=4cm]{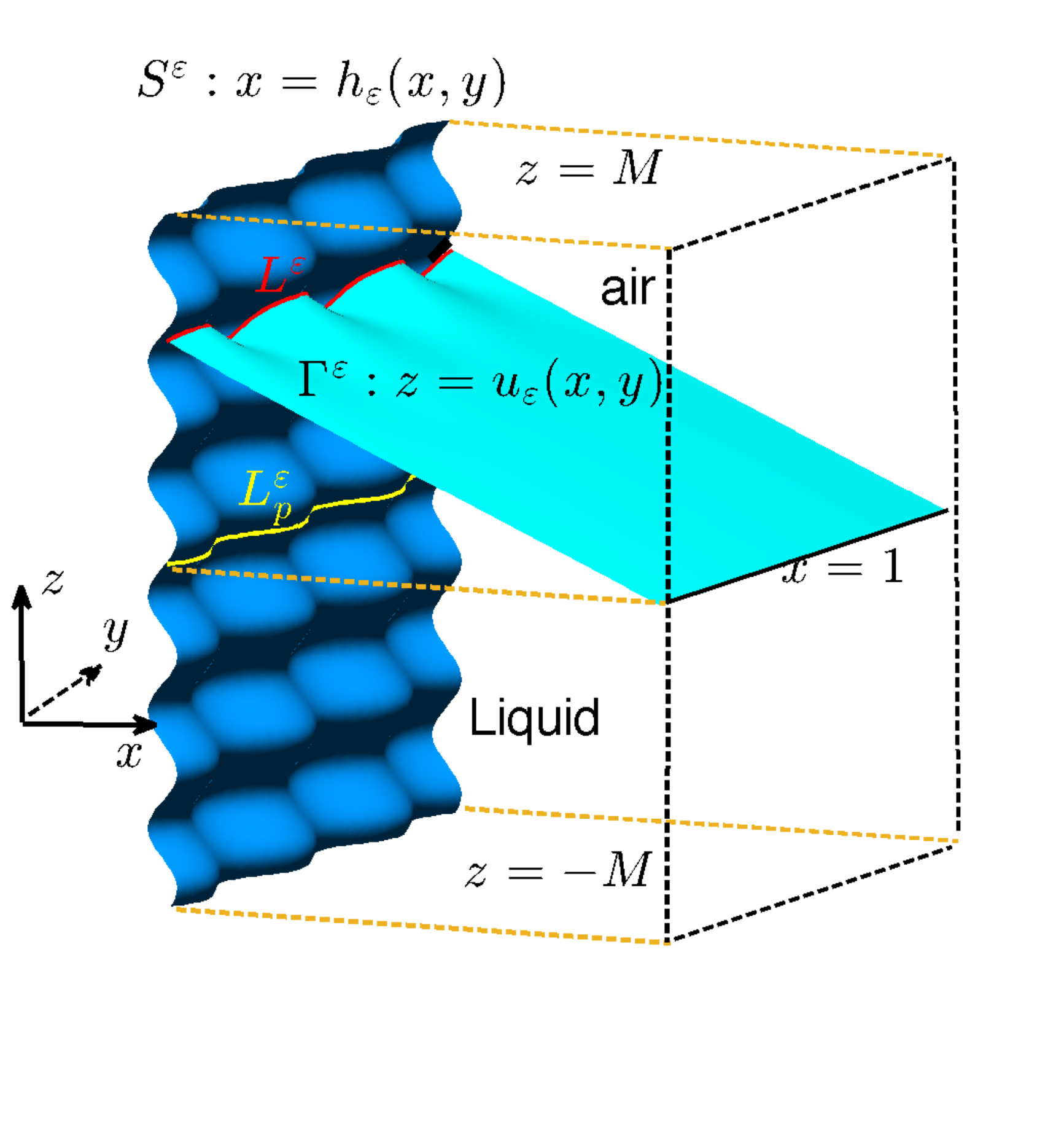}}
    \vspace*{-10mm}
    \caption{The rough surface and the fluid-air interface.}    \lbl{fig:2}
 \end{figure}
 
Now we derive the equation satisfied by the function $u_\eps$.  Since we are 
 interested only in the  contact angle between $\Gamma^\eps$ and $S^\eps$,
 we can ignore the gravity. By the Young-Laplace equation\cite{landaufluid}, in
 equilibrium, the mean curvature $\kappa$ of the interface $\Gamma^\eps$ is determined only by
 the pressure jump across the liquid-air interface, 
 \begin{equation}\label{e:YL}
 \kappa =\frac{[p]}{2\gamma},
 \end{equation}
where $ [p]$ is the pressure jump and $\gamma$ is the surface tension of the interface.
 On the contact line 
$L^\eps$, the microscopic contact angle is equal to the local Young's angle\cite{Young1805}, 
\begin{equation}\label{e:Young}
\mathbf n_{\Gamma}^\eps\cdot \mathbf n_{S}^\eps=\cos\theta_s^{\eps}(y)
\end{equation}
where $\theta_s^\eps(y)=\theta_s(\frac{y}{\eps},\psi_\eps(y))$, $\mathbf{n}^\eps_{\Gamma}$ and $\mathbf{n}^\eps_S$
are the unit normal vectors of the liquid-air interface and the solid surface, respectively,  defined
as,
\begin{equation}\label{e:normal}
\mbf n_\Gamma^\eps:=\frac{(-\partial_x u_\eps,-\partial_y u_\eps,1)^T}{\sqrt{1+(\partial_x u_{\eps})^2+(\partial_y u_{\eps})^2}},
\qquad
\mbf n_S^\eps:=\frac{(1, -\partial_y h_\eps,-\partial_z h_\eps)^T}{\sqrt{1+(\partial_y h_\eps)^2+(\partial_z h_\eps)^2}}.
\end{equation}

For simplicity, we assume the pressure jump $[p]=0$, then the equations~\eqref{e:YL},
\eqref{e:Young} and  the boundary condition \eqref{e:bnd} can be rewritten as
\begin{equation}\label{e:pb3}
\left\{
 \begin{array}{ll}
\div\Big(\frac{\nabla u_\eps}{\sqrt{1+|\nabla u_{\eps}|}}\Big)=0 ,& \hbox{ in } D^\eps \\
\mathbf{n}_{S}^\eps\cdot\mbf{n}_{\Gamma}^\eps=\cos\theta^\eps_s ,& \hbox{ on } L_p^{\eps},\\
u_\eps(1,y)=0, \\
 u_\eps(x,y) \hbox{ is periodic in $y$ with period $\eps$,}
 \end{array}
 \right.
\end{equation}
where $\nabla=(\partial_x,\partial_y)^T$, $\div:=\partial_x+\partial_y$, and 
\begin{equation}\label{e:domain}
D^\eps:=\{(x,y)|\phi_\eps(y)<x<1,0<y<a\}, \hbox{ and } L_p^{\eps}:=\{(x,y)| x=\phi_\eps(y),0<y<a\}.
\end{equation}
Here we use the standard formula for mean curvature $\kappa=\div\big(\frac{\nabla u_\eps}{\sqrt{1+|\nabla u_{\eps}|}}\big)$.


When $\eps$ is small, $\Gamma^\eps$ is a perturbation of a homogenized surface $\Gamma^0$(see Fig.~\ref{fig:homo}):
\begin{equation}\label{e:gamma0}
\Gamma^0:=\{ z=u_0(x) | y\in(0,a),x\in(0,1)\},
\end{equation}
where $u_0$ is a function depending only in $x$. Denote $\theta_a$ the angle between $\Gamma^0$ 
and the macroscopic solid surface 
$$S^0:=\{x=0| y\in(0,a),z\in(-M,M)\}.$$
We are mainly
interested in how the macroscopic apparent contact angle $\theta_a$ depends on local geometric and chemical roughness 
near the contact line. 

In the end of the section, we would like to remark that, in general the equation~\eqref{e:pb3} may 
not admit a solution, since the associated minimal surface 
may not be $z$-graph. This happens in the Cassie-Baxter state of wetting, where air may be trapped
under liquid\cite{chenWangXu2013}. In this paper, we always assume \eqref{e:pb3} admits a solution.
  \begin{figure}[htb!]
\vspace*{-3mm}
    \centering
  \resizebox{!}{7.5cm}
    {\includegraphics[height=4cm]{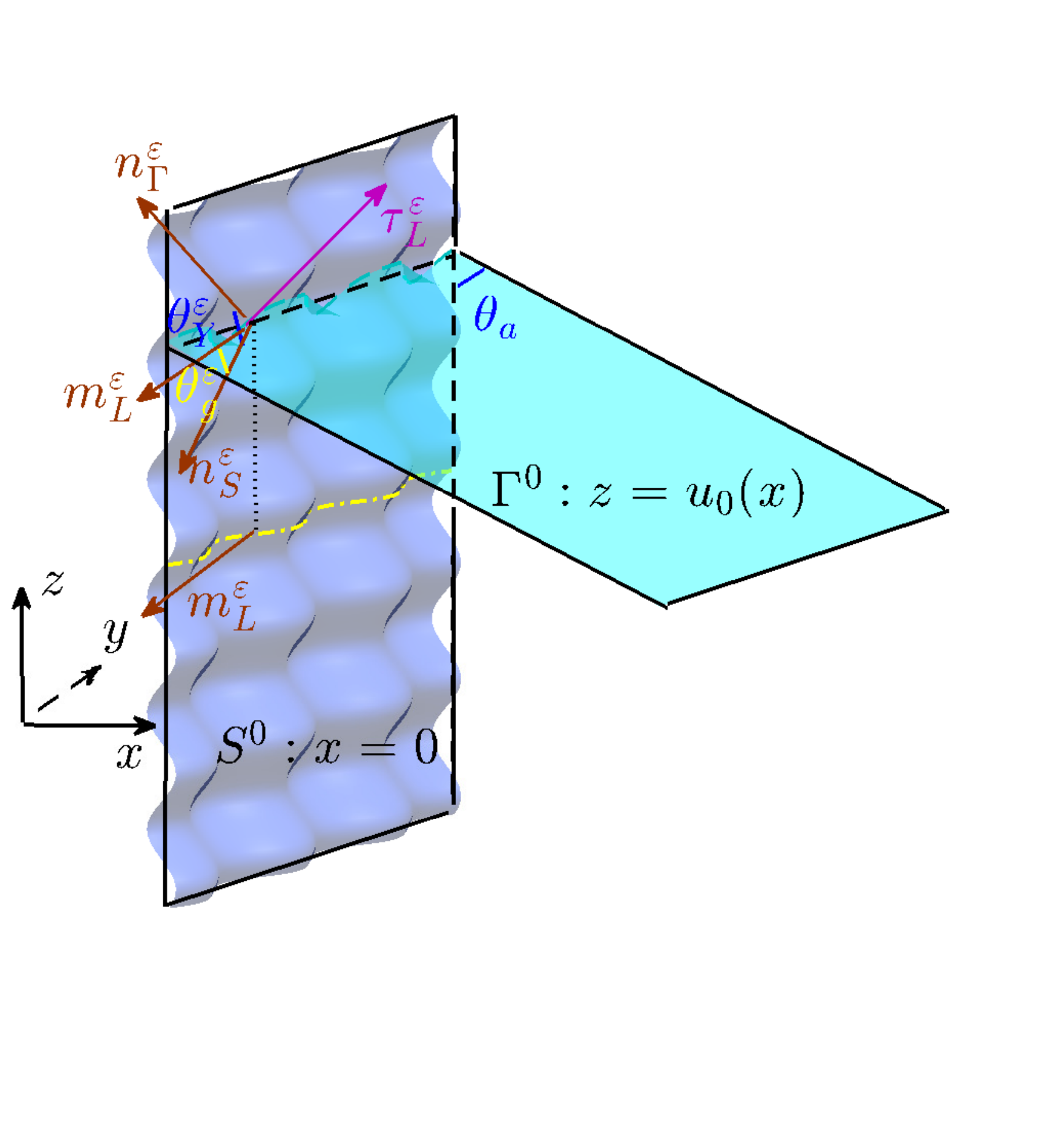}}
    \vspace*{-10mm}
    \caption{The homogenized interface and the apparent contact angle.}    \lbl{fig:homo}
 \end{figure}
 
 \section{Homogenization}
In this section, we  derive the formula for the homogenized interface $\Gamma^0$ and the 
apparent contact angle $\theta_a$ by asymptotic two-scale homogenization analysis. 
 Since $\eps$ is small, we can assume that 
 the solution $u_{\eps}$ of \eqref{e:pb3} is oscillating mainly in the vicinity of the contact line $L^\eps$. 
Far from $L^\eps$, we  can assume that the interface $\Gamma^\eps$ is a perturbation
 of a macroscopic surface $\Gamma^0$.
We can do outer and inner expansions of $u_{\eps}$ as follows. 

{\it Outer expansion.} Far from the contact line $L^\eps$, we let 
\begin{equation}\label{e:outexp}
u_{\eps}(x,y)=u_0(x)+\eps u_1(x,\frac{y}{\eps})+\eps^2 u_2(x,\frac{y}{\eps})+\cdots.
\end{equation}
Denote the fast parameter $Y=\frac{y}{\eps}$, then $u_i(x,Y)$ is periodic in $Y$ with period 1
for $i\geq 1$, and $\partial_y=\eps^{-1}\partial_Y$.

 Substitute the above expansion to the first equation of  \eqref{e:pb3}. 
The leading order in $\eps$ of the equation is
\begin{align}
\partial_Y\Big(\frac{\partial_Y u_1}{\sqrt{1+(\partial_x u_0)^2+(\partial_Y u_1)^2}}\Big)=0.
\end{align}
This leads to 
\begin{align*}
\frac{\partial_Y u_1}{\sqrt{1+(\partial_x u_0)^2+(\partial_Y u_1)^2}}=c_0(x),
\end{align*}
for some function $c_0(x)$ depending only in $x$ and such that $|c_0(x)|<1$.
Direct calculations give
\[
(\partial_Y u_1)^2=\frac{c_0^2(1+(\partial_x u_0)^2)}{1-c_0^2}=:c_1(x).
\]
This leads to 
$$u_1=\pm\sqrt{c_1(x)}Y+c_2(x),$$
for  a function $c_2$ independent of $Y$. Noticing that $u_1$ is periodic in $Y$ with period $1$,
we have $c_1(x)=0$ and $u_1$ depends only in $x$.

The next order of the expansion of the first equation of \eqref{e:pb3} gives
\begin{equation}
\partial_x\Big(\frac{\partial_x u_0}{\sqrt{1+(\partial_x u_0)^2}}\Big)
+
\partial_Y\Big(\frac{\partial_Y u_2}{\sqrt{1+(\partial_x u_0)^2}}\Big)=0.
\end{equation}
Integral the equation with respect to $Y$ in the interval $(0,1)$. Noticing that $u_2$ 
is period in $Y$ with period 1, we have
\begin{equation}\label{e:out1}
\partial_x\Big(\frac{\partial_x u_0}{\sqrt{1+(\partial_x u_0)^2}}\Big)=0.
\end{equation}
This  implies
 \begin{equation}\label{e:2ndderiv_u0}
 \partial_{xx}u_0=0,
 \end{equation}
i.e. $u_0(x)$ is a linear function of $x$.

Now we consider the boundary conditions. Substitute the expansion~\eqref{e:outexp} to the third equation of \eqref{e:pb3}.
The leading order implies that $u_0(1)=0$. Together with \eqref{e:2ndderiv_u0}, we have
 \begin{equation}\label{e:linearu0}
u_0=k(1-x),
 \end{equation}
for some constant $k$. 
The equation implies  that the homogenized liquid-air interface $\Gamma^0$ is  planar and has an apparent contact angle(with
respect to $S^0$) given by
\begin{align}
\cos\theta_a=\frac{k}{\sqrt{1+k^2}}.\lbl{e:out3}
\end{align}

{\it Inner expansions.} Now we consider the expansion of $u_\eps$ 
near the contact line $L^\eps$.
We assume that the interface oscillates around $\{z=\hat{u}_0\}$ near the rough solid surface:
\begin{align}
u_{\eps}(x,y)=\hat{u}_0+\eps \hat{u}_1(X,Y)+\eps^2\hat{u}_2(X,Y)+\cdots,\lbl{e:inEx}
\end{align}
where $X=\frac{x}{\eps}, Y=\frac{y}{\eps}$ and $\hat{u}_i(X,Y)$ is periodic in $Y$ with period $1$, $i\geq 1$.
Since the contact line $L^\eps=\{(x,y,z): x=\phi_\eps(y),z=\psi_\eps(y)\}$  is the intersection of $\Gamma^\eps$ and $S^\eps$,  we also have 
\begin{equation}
\label{e:exp_phi}
\phi_\eps(y)=\eps\hat \phi_1(Y)+\eps^2\hat \phi_2(Y)+\cdots,
\end{equation}
and 
$$\psi_\eps(y)=\hat{u}_0+\eps\hat{\psi}_1(Y)+\cdots.$$

 Direct calculations lead to
\begin{equation*}
\nabla =\eps^{-1} \nabla_{\mbf X},
\end{equation*}
with $\nabla_{\mbf X}=(\partial_X,\partial_Y)$
, and 
\begin{equation}\label{e:exp_nGamma}
\mathbf{n}_{\Gamma}^\eps=\mathbf{n}_{\Gamma}^0+\eps\mathbf{n}_{\Gamma}^1+\cdots,
\end{equation} 
with 
\begin{equation}\label{e:nGamma0}
\mathbf{n}_{\Gamma}^0=\frac{(-\partial_X u_1,-\partial_Y u_1,1)}{\sqrt{1+|\nabla_{\bf X} u_1|^2}}.
\end{equation}
We submit the expansion~\eqref{e:inEx} to the first equation of~\eqref{e:pb3}. On the leading order, we have 
\begin{align}
&\nabla_{\mbf X}\cdot\left(\frac{\nabla_{\mbf X}\hat u_1}{\sqrt{1+(\nabla_{\mbf X}\hat{u}_1)^2}}\right)=0, 
\qquad\qquad  X>\hat \phi_1(Y), \lbl{e:in1}
\end{align}

{\it The apparent contact angle.}
We now derive how the apparent contact angle $\theta_a$ depends on the microscopic properties in the system.
We will need the matching condition between the inner and outer expansions:
 \begin{align}
\lim_{x\rightarrow 0} u_0=\hat{u}_0,
\qquad \lim_{x\rightarrow 0}\partial_x u_0=\lim_{X\rightarrow\infty}\partial_{X}\hat{u}_1,\qquad
\lim_{x\rightarrow 0}\partial_y u_0=\lim_{X\rightarrow\infty}\partial_{Y}\hat{u}_1.\lbl{e:match}
\end{align}

 Integral the equation \eqref{e:in1} in a domain 
 $$
 \hat{D}_{T}=\{(X,Y):  \hat \phi_1(Y)<X<T, 0<Y<1\},
 $$
 for any $T>0$.  Integration by part leads to
 \begin{align}
0&=\int_{\hat{D}_T}\nabla_{\mbf X}\cdot\left(\frac{\nabla_{\mbf X}\hat u_1}{\sqrt{1+(\nabla_{\mbf X}\hat{u}_1)^2}}\right)\d X \d Y\nonumber\\
&=-\int_{\hat{L}_{p}} \frac{\nabla_{\mbf X}\hat u_1}{\sqrt{1+(\nabla_{\mbf X}\hat{u}_1)^2}}
\cdot\frac{(1,-\partial_Y\hat \phi_1)^T}{\sqrt{1+(\partial_Y\hat \phi_1)^2}} \d S 
+\int_{0}^1\frac{\nabla_{\mbf X}\hat u_1}{\sqrt{1+(\nabla_{\mbf X}\hat{u}_1)^2}}|_{X=T}\cdot (1,0)^T \d Y,\lbl{e:tem}
 \end{align}
 where $\hat{L}_p:=\{(X,Y)|X=\hat \phi_1(Y),0<Y<1\}$.
 This further leads to
 \begin{align}\label{e:matchBC}
 \int_{0}^1\frac{\partial_X\hat u_1}{\sqrt{1+(\nabla_{\mbf X}\hat{u}_1)^2}}|_{X=T} \d Y=\int_{\hat{L}_{p}} \frac{\nabla_{\mbf X}\hat u_1}{\sqrt{1+(\nabla_{\mbf X}\hat{u}_1)^2}}
\cdot\frac{(1,-\partial_Y \hat \phi_1)^T}{\sqrt{1+(\partial_Y \hat \phi_1)^2}} \d S .
 \end{align}
 
Let $T$ goes to infinity. 
Using the matching condition \eqref{e:match}, the left hand side term of \eqref{e:matchBC} reduces  to,
\begin{align}
\lim_{T\rightarrow\infty} \int_{0}^1\frac{\partial_X\hat u_1}{\sqrt{1+(\nabla_{\mbf X}\hat{u}_1)^2}}|_{X=T} \d Y
=\frac{\partial_x u_0}{\sqrt{1+(\partial_x u_0)^2}}=
\frac{-k}{\sqrt{1+k^2}}=-\cos\theta_a.\label{e:left}
\end{align}
For the right hand side term, we get
\begin{align}
\int_{\hat{L}_{p}} \frac{\nabla_{\mbf X}\hat u_1}{\sqrt{1+(\nabla_{\mbf X}\hat{u}_1)^2}}
\cdot\frac{(1,-\partial_Y \hat \phi_1)^T}{\sqrt{1+(\partial_Y \hat \phi_1)^2}} \d S 
&=
\int_{\hat{L}_{p}} \frac{(\partial_X u_1, \partial_Y u_1,-1)}{\sqrt{1+(\nabla_{\mbf X}\hat{u}_1)^2}}
\cdot\frac{(1,-\partial_Y \hat \phi_1,0)^T}{\sqrt{1+(\partial_Y \hat \phi_1)^2}} \d S \nonumber\\
&= -\int_{\hat{L}_{p}} \mbf n_{\Gamma}^0\cdot\mbf m_{L}^0 \d S.\label{e:right}
\end{align}
Here $\mbf n_{\Gamma}^0$ is defined in~\eqref{e:nGamma0} and 
$\mbf m_{L}^0:=\frac{(1,-\partial_Y \hat \phi_1,0)^T}{\sqrt{1+(\partial_Y \hat \phi_1)^2}}$, which 
is the inner normal of $\hat{L}_p$. The above three equations implies
\begin{equation}\label{e:relation0}
\cos\theta_a=\frac{k}{\sqrt{1+k^2}}= \int_{\hat{L}_{p}} \mbf n_{\Gamma}^0\cdot\mbf m_{L}^0 \d S
=\int_0^1(\mbf n_{\Gamma}^0\cdot\mbf m_{L}^0)\sqrt{1+(\partial_Y\hat \phi_1)^2}\d Y .
\end{equation}

In reality, it is more convenient to use the integral on $L^\eps_p$ instead of on $\hat{L}^\eps_p$.
Denote 
\begin{equation}\label{e:normalCL}
 \mbf m_L^\eps=\frac{(1,-\partial_y\phi_\eps,0)}{\sqrt{1+(\partial_y \phi_\eps)^2}}
\end{equation}
the inner normal of $L^\eps_p$. It is easy to see that  
$$\mbf m_L^{\eps}=\mbf m_L^0+\eps \mbf m_L^1+\cdots.$$ 
By the equation~\eqref{e:exp_phi},\eqref{e:exp_nGamma} and the above expansion, we could use the following formula instead of \eqref{e:relation0}:
\begin{align}
\cos\theta_a&\approx 
\aint_0^{\eps}(\mbf n_{\Gamma}^\eps\cdot\mbf m_{L}^\eps)
\sqrt{1+(\partial_y\phi_\eps)^2}\d y.\label{e:appAng0}
\end{align}
Hereinafter, we use the notation $\aint_0^{\eps}=\frac{1}{\eps}\int_0^\eps$.  
The inner product $\mbf n_{\Gamma}^\eps\cdot\mbf m_{L}^\eps$ in the equation can be
understood as follows. Denote  
$$\tau^\eps_L:=\frac{( \partial_y\phi_\eps,1,\partial_y\psi_\eps)^T}{\sqrt{1+(\partial_y\phi_\eps)^2+(\partial_y\psi_\eps)^2}},$$
  the unit tangential vector of the contact line $L^\eps$.
If we define an angle $\theta_g^\eps$ on $L^\eps$(see Fig.~\ref{fig:homo}),
\begin{equation}
\label{e:geoAng}
\theta_g^\eps(y):=\arcsin((\mbf m_L^\eps\times\mbf n^\eps_S)\cdot\tau_L^\eps),
\end{equation}
that depends only in the geometric property of the rough surface on the contact line. Noticing 
$\mathbf{n}^\eps_\Gamma\cdot\mathbf{n}^\eps_{S}=\cos\theta_s^\eps(y)$ on $L^\eps$(see in \eqref{e:pb3}), and the geometric relation  $\tau_L^\eps\cdot\mathbf{m}_L^\eps=0, \tau_L^\eps\cdot\mbf n_{\Gamma}^\eps=0$, and $\tau_L^\eps\cdot\mbf n_S^\eps=0$,
we easily have 
\begin{equation}\label{e:innproduct}
\mathbf n_{\Gamma}^\eps\cdot\mbf m_{L}^\eps=\cos(\theta_s^\eps(y)- \theta_g^\eps(y)).
\end{equation}


Combing the equations~(\ref{e:linearu0})-(\ref{e:out3}) and (\ref{e:appAng0})-(\ref{e:innproduct}), we know that $\Gamma^0$ is  a planar surface:
\begin{equation}\label{e:homo}
z=u_0(x):=k(1-x),
\end{equation}
and the apparent contact angle $\theta_a$ is given by 
\begin{equation}\label{e:ModifiedWC}
\cos\theta_a=\frac{k}{\sqrt{1+k^2}}=
\aint_0^{\eps}\cos\big(\theta^\eps_s(y)-\theta_g^\eps(y)\big)\sqrt{1+(\partial_y\phi_\eps)^2}\d y.
\end{equation}
The formula \eqref{e:ModifiedWC} characterizes how the macroscopic contact angle
depends on the local chemical and geometric information of the rough surface on the contact line. 

\section{The modified Wenzel equation and the modifed Cassie equation}
 In this section, we will
describe the  physical meanings  of \eqref{e:ModifiedWC}.
We can see that the equation gives some new results which are 
different from the classical  Wenzel and Cassie equations.
Specifically, we will derive a modified Cassie equation
for chemically patterned surface and a modified Wenzel equation
for geometrically rough surface.

{\it Young's angle.}
Firstly, we show that when the solid surface is flat and homogeneous(see Fig.~\ref{fig:planar}(a)),
the equation \eqref{e:ModifiedWC}
will give a Young's angle.
Actually, when the surface is flat,  the solid boundary is given by $\{x=0\}$. The normal vector $\mathbf{m}_L$
is parallel to $\mathbf{n}_{S}$. Then we have 
$\theta_g=0.$
Since the surface is homogeneous,  the static contact angle function $\theta_s^{\eps}(y)$ is equal to a constant $\theta_Y$. 
Furthermore, the contact line is in the solid surface so that $\phi_\eps=0$.
So the equation~\eqref{e:ModifiedWC} is reduced   to
\begin{align}
\cos\theta_a=\cos\theta_Y.
\end{align}
This implies $\theta_a=\theta_Y$, i.e. the macroscopic contact angle is equal to  Young's angle of the solid surface. This is consistent with the  physical 
principle that,  on a planar homogeneous surface,  the contact angle is  Young's angle.
  \begin{figure}[htb!]
\vspace*{-2mm}
    \centering
      \subfigure[homogeneous planar solid surface]{
      \includegraphics[height=5.5cm]{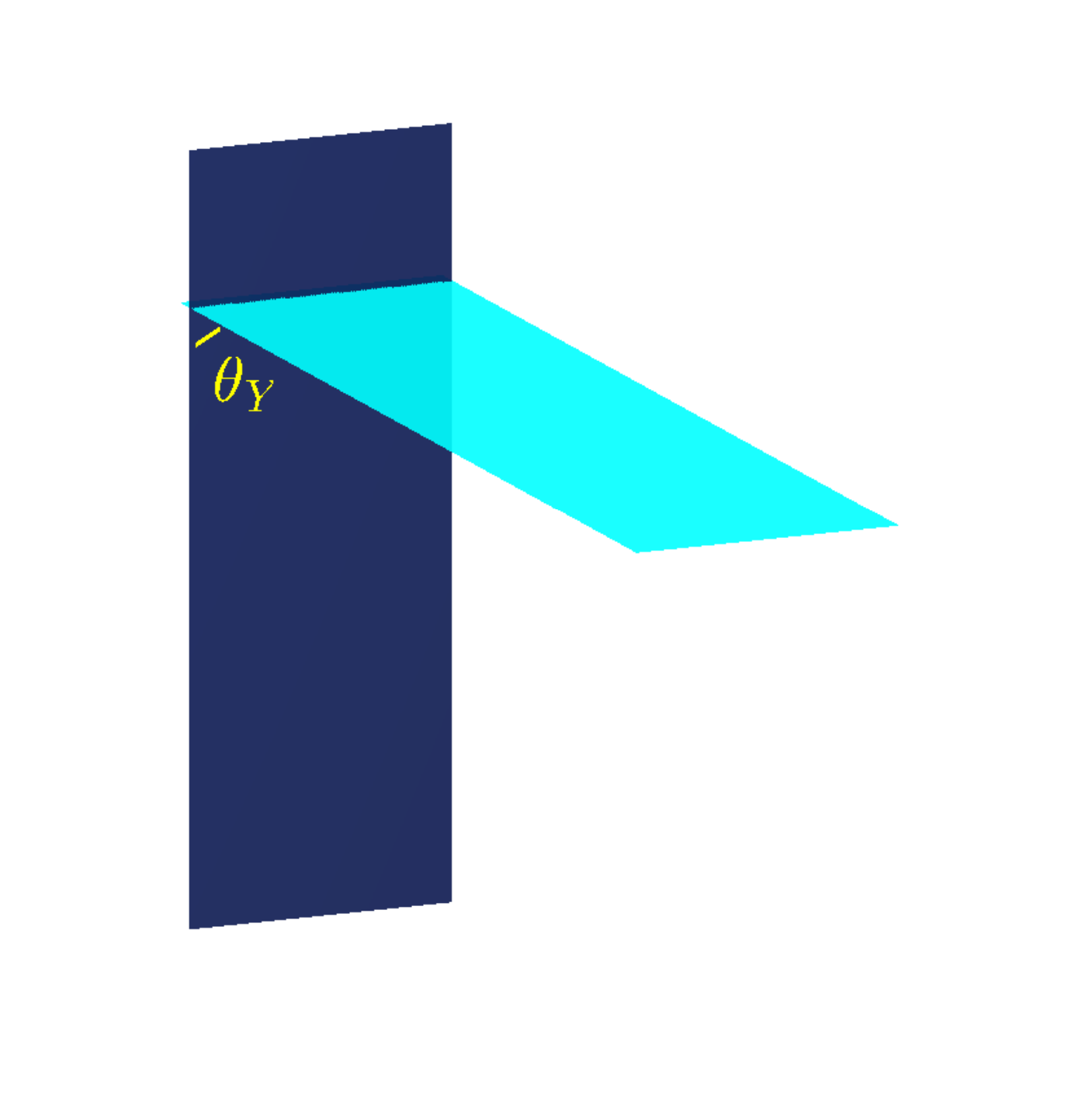}
      }
       \subfigure[inhomogeneous planar solid surface]{
      \includegraphics[height=5.5cm]{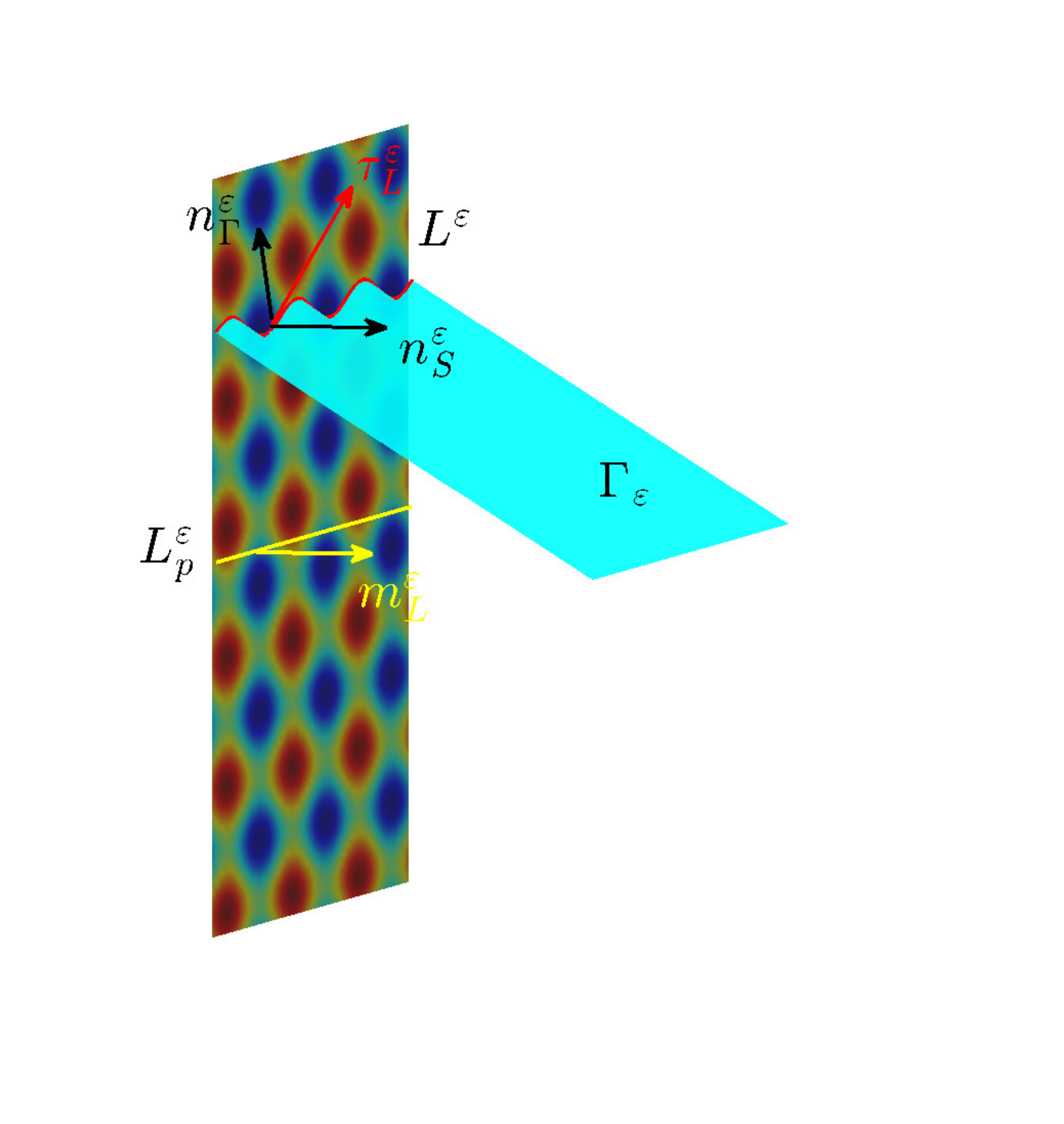}
      }
        \subfigure[inhomogeneous planar solid surface with striped pattern]{
      \includegraphics[height=5.5cm]{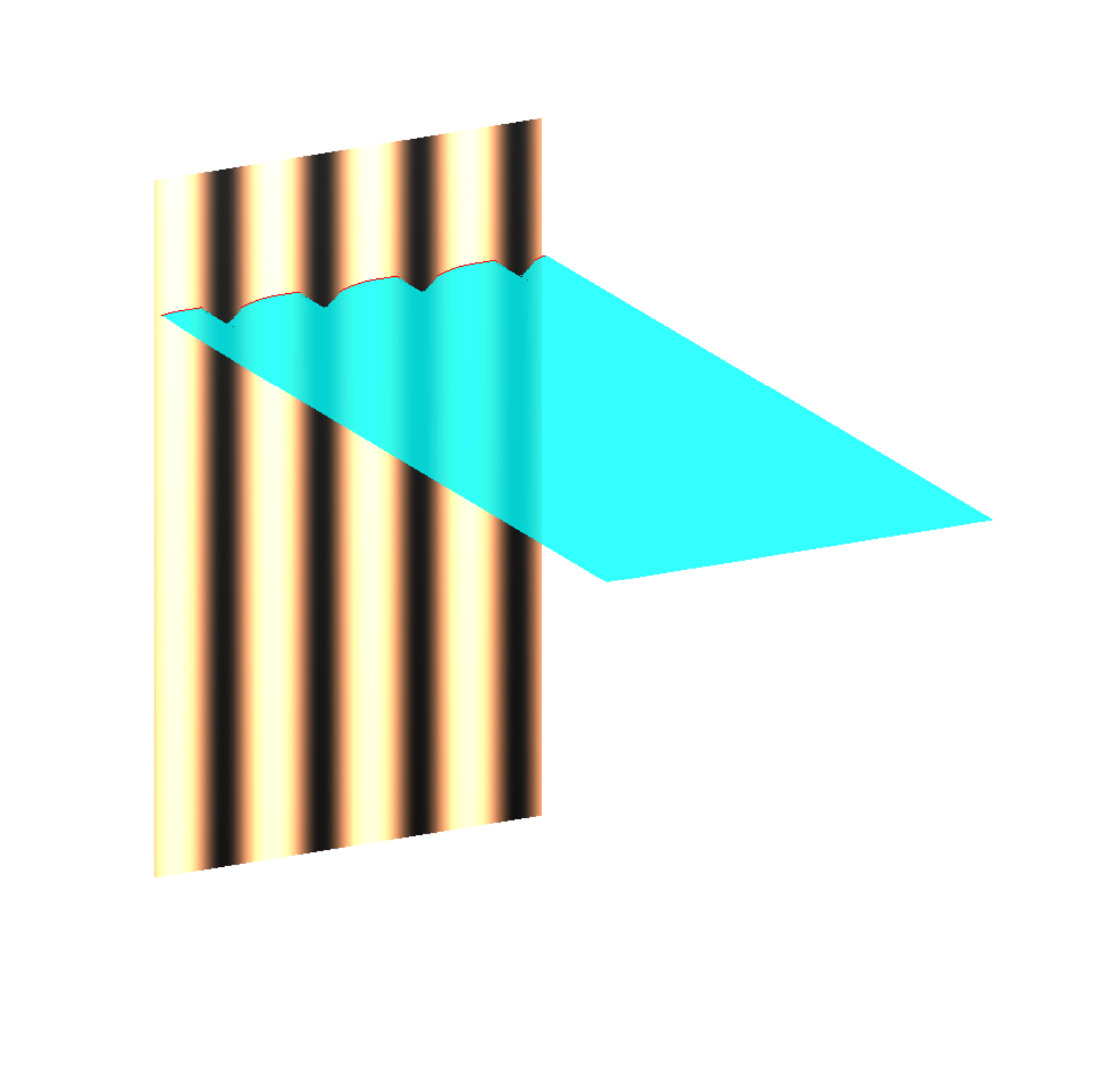}
      }
    \vspace*{-2mm}
    \caption{A flat solid surface and the fluid-fluid interfaces}    \lbl{fig:planar}
 \end{figure}

{\it The modified Cassie equation}.
If the surface is flat but inhomogeneous,  the 
solid surface is still given by $\{x=0\}$. 
This is shown in Fig.~\ref{fig:planar}(b),  where the colormap on the solid surface implies the inhomogeneity of the surface,
i.e. $\theta_s(Y,Z)$ is a periodic function. 
In this case,  we still have
$\theta_g^\eps=0$ and $\phi_\eps=0$.
The homogenized equation is reduced to
\begin{align}\label{e:modi_Cassie}
\cos\theta_a=\aint_{0}^{\eps}\cos\theta_s(y/\eps,\psi_\eps(y))\d y.
\end{align}
The equation implies that the apparent contact angle is a line average of  Young's  angle  
along the contact line. 
The equation~\eqref{e:modi_Cassie}, regarded as the modified Cassie
equation, has been derived formally in\cite{XuWang2013}. On chemically patterned surface,
i.e. the solid surface is composed by two materials, the modified Cassie equation is also 
proposed in \cite{Raj12}.
It turns out that the modified Cassie equation can describe the contact angle hysteresis on chemically inhomogeneous surface, and is consistent with some recent experiments\cite{Raj12,Priest2013}. 
 
It is easy to see the equation~\eqref{e:modi_Cassie} is different from the classical Cassie equation
\begin{equation}\label{e:Cassie}
\cos\theta_a=\aint_{0}^{\eps}\aint_{0}^{\eps}\cos\theta_s(y/\eps,z/\eps)\d y\d z,
\end{equation}
which says that the cosine of the apparent contact angle is 
an area average of the cosine of the Young's angle in chemically inhomogeneous surface.
 However, in some special situation where the energy  in the system has a unique minimizer,  the two equations can be equivalent.
To show that, we consider an example with a chemically patterned surface as in Figure~\ref{fig:planar}(c), where the Young's angle $\theta_s(y/\eps)$ is a periodic function in $y$. In this example, it is easy to see that both equations~\eqref{e:modi_Cassie} and 
\eqref{e:Cassie} reduce to the same equation
\begin{equation*}
\cos\theta_a=\aint_{0}^{\eps}\cos\theta_s(y/\eps)\d y.
\end{equation*}
More discussions on their differences and relations we refer to \cite{XuWang2013}.

{\it The modified Wenzel's equation.}
When the solid surface is chemically homogeneous but geometrically rough, 
we have  $\theta_s(y/\eps,z/\eps)\equiv\theta_Y$.  The equation \eqref{e:ModifiedWC} is reduced to
\begin{align}\label{e:modi_Wezel}
\cos\theta_a=\aint_{0}^\eps \cos\Big(\theta_Y-\theta_g^\eps (y)\Big) \sqrt{1+(\partial_y\phi_\eps)^2}\d y.
\end{align}
This equation means that the cosine of the apparent contact angle is equal to the linear average of
the cosine of the  Young's angle subtracting a geometric angle on the contact line.
We call this equation a modified Wenzel equation.

In the following, we discuss the relation between the equation~\eqref{e:modi_Wezel} and the classical Wenzel equation
\begin{align}\label{e:class_Wenzel}
\cos\theta_a= r\cos\theta_Y,
\end{align}
where  the roughness parameter 
$$r=\aint_{0}^\eps\aint_0^\eps \sqrt{1+ (\partial_y h_\eps)^2+(\partial_z h_\eps)^2}\d y\d z$$
 is the ratio between the area of the rough surface and the effective planar surface. 
 
In some special case, the equation \eqref{e:modi_Wezel} is reduced   to the equation \eqref{e:class_Wenzel}.
For example, when the solid surface is a wave-like surface given by
$\{x=\eps h(y/\eps)\}$ for a periodic function $h(Y)$(see Fig.~\ref{fig:wavelike}(a)). In this case,
we have $\phi_\eps(y)=h_\eps(y)$ and $\theta_g^\eps=0$.
Thus, the equation~\eqref{e:modi_Wezel} can be reduced to
\begin{align*}
\cos\theta_a&
= \aint_0^\eps\sqrt{1+(\partial_y h_\eps)^2}\d y \cos\theta_Y
=r\cos\theta_Y,
\end{align*}
i.e. the classical Wenzel equation.

In general, the contact angle given by the modified Wenzel equation~\eqref{e:modi_Wezel}
is  different from the equation \eqref{e:class_Wenzel}.
For example, if we consider a wavelike solid
surface is given by $\{x=\eps h(z/\eps)\}$ for a periodic function $h(Z)$(see Fig.~\ref{fig:wavelike}(b)), 
it is easy to see  that a planar liquid-air interface, i.e. $u_\eps(x,y)=k(1- x)$ for some constant $k$, satisfies the equation~\eqref{e:pb3}, once the local contact angle is equal to $\theta_Y$.
In this case we have $\phi_\eps$ is a constant function and $\theta_g^{\eps}|_{L^\eps}$ is a constant
depending only on the position of the contact line. 
The equation~\eqref{e:ModifiedWC} is reduced to 
\begin{equation}\label{e:4.6}
\cos\theta_a=\cos(\theta_Y-\theta_g^{\eps}|_{L^\eps}), \hbox{or equivalently } \theta_a=\theta_Y-\theta_g^{\eps}|_{L^\eps},
\end{equation}
The situation is  more clearly  in side view. As shown in Fig.~\ref{fig:wavelike}(c), 
suppose the Young angle is equal to $90^o$, then the equation~\eqref{e:pb3} can have multiple solutions(the light blue dashed lines).
 These solutions correspond to different apparent contact angles due to different 
 $\theta_g^{\eps}$. All these angles satisfies
 the modified Wenzel equation~\eqref{e:modi_Wezel}, or equivalently \eqref{e:4.6}. 
 In comparison, 
 the equation \eqref{e:class_Wenzel} gives only
 a unique apparent angle on the surface and can not describe all the solutions.

  \begin{figure}[htb!]
\vspace*{-2mm}
    \centering
      \subfigure[wave like surface $x=\eps h(y/\eps)$]{
      \includegraphics[height=4.8cm,width=5cm]{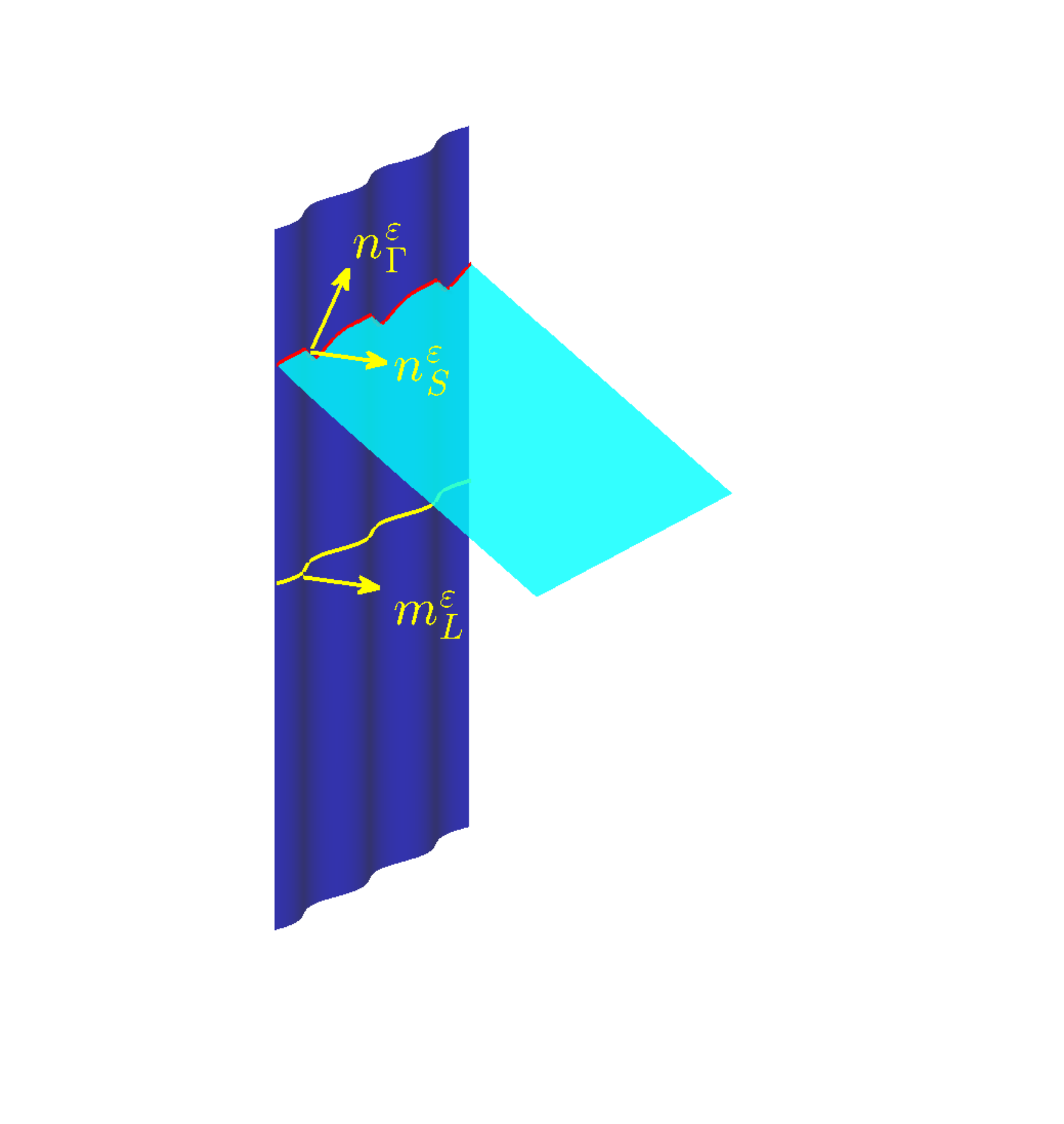}
      } 
       \subfigure[wave like surface $x=\eps h(z/\eps)$]{
      \includegraphics[height=4.8cm,width=5cm]{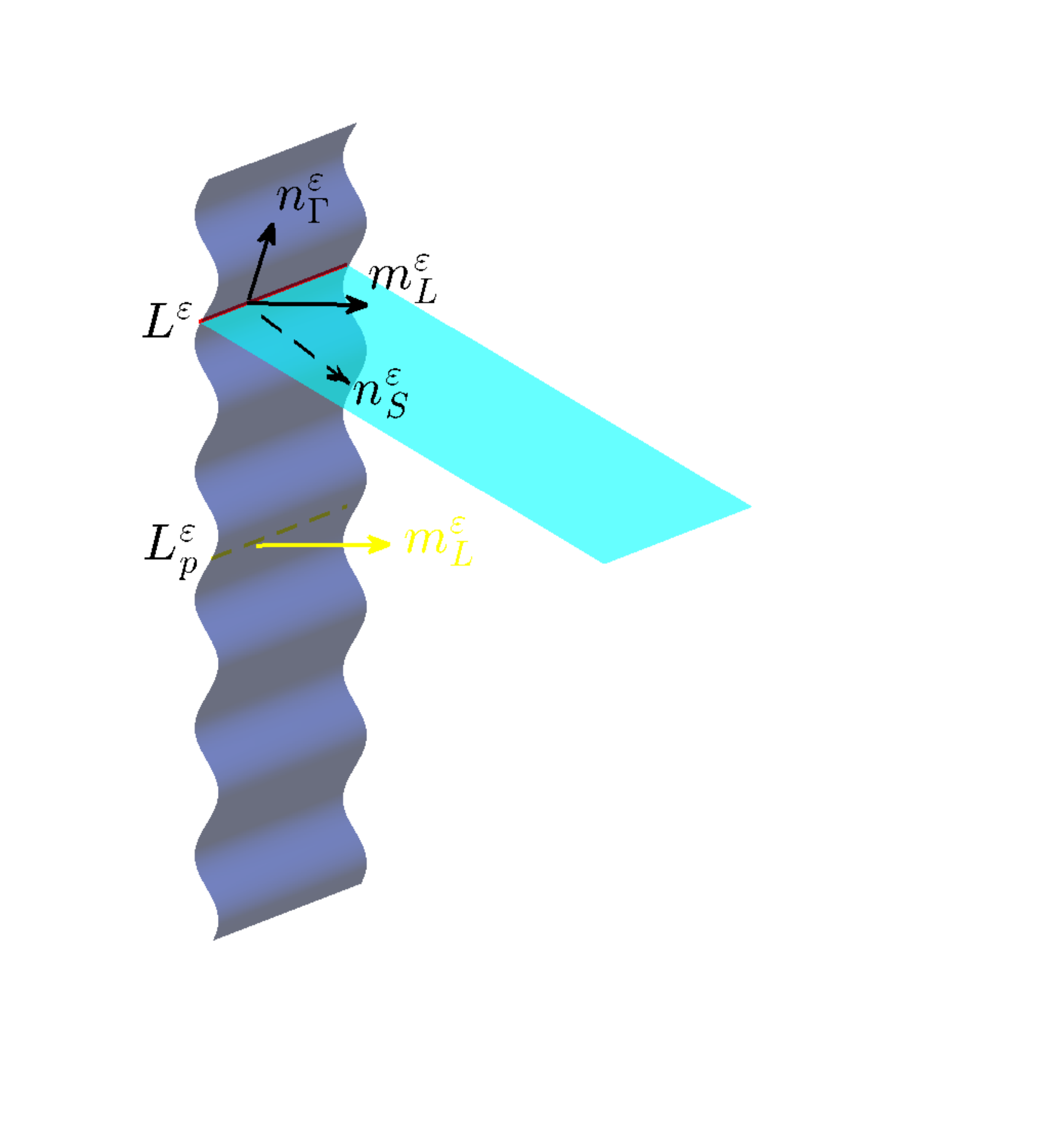}
      } 
       \subfigure[sideview of the surface $x=\eps h(z/\eps)$]{
      \includegraphics[height=4.8cm,width=5cm]{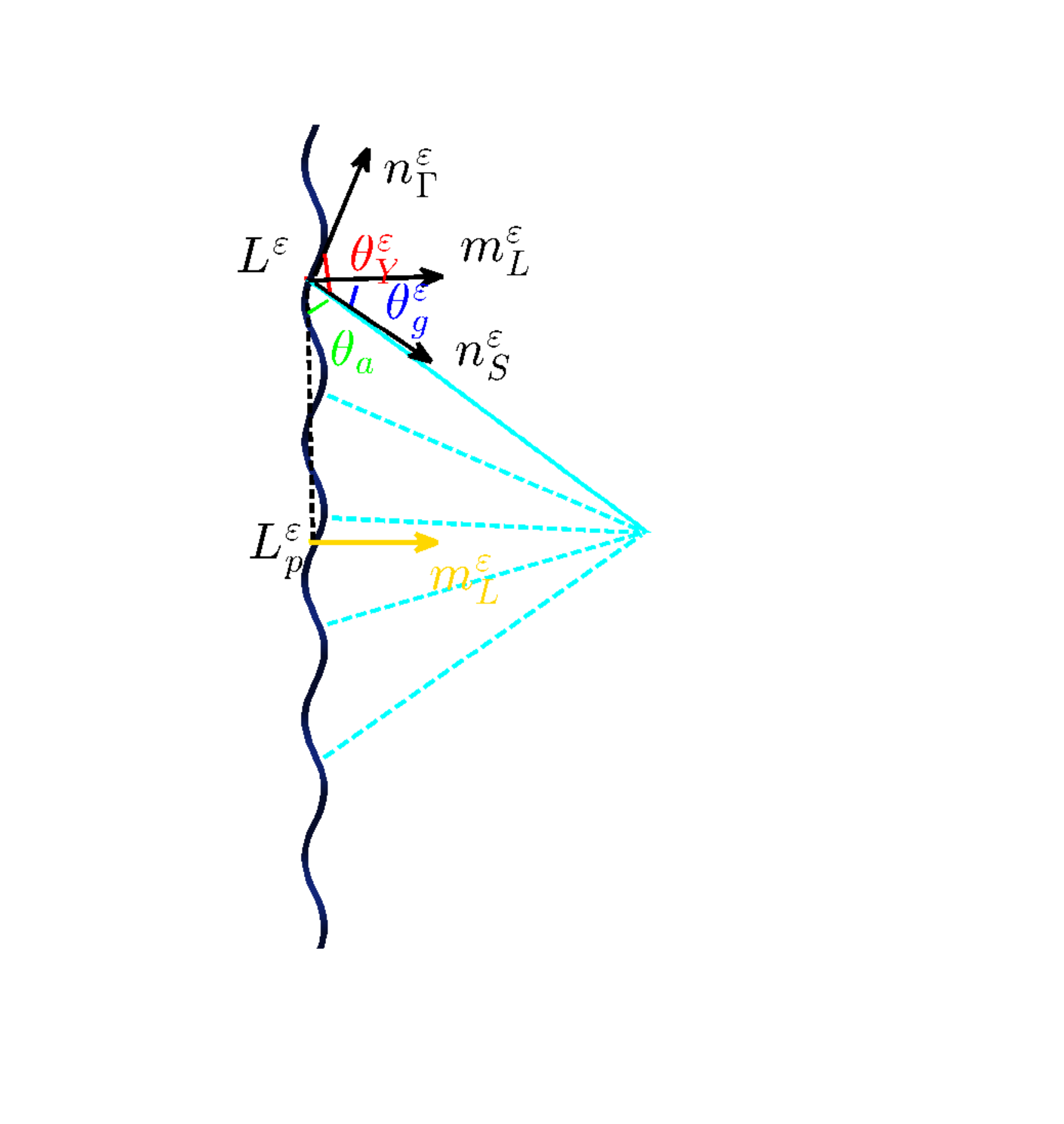}
      }
    \vspace*{-2mm}
    \caption{Rough surface and the fluid-fluid interfaces}    \lbl{fig:wavelike}
 \end{figure}
The difference between the modified Wenzel equation~\eqref{e:modi_Wezel} and the classical Wenzel equation~\eqref{e:class_Wenzel}
can be understood as follows. 
While the classical Wenzel equation corresponds to the global minimizer
of the total surface energy in the system\cite{chenWangXu2013},
the modified Wenzel equation, which is derived from the equilibrium equation~\eqref{e:pb3},
may correspond to the local minimizers in the system. Since the  minimizer is not unique in general,
the contact angle given by the modified Wenzel equation~\eqref{e:modi_Wezel} can be different from 
that given by the classical Wenzel equation \eqref{e:class_Wenzel}, as shown in the previous example.
\section{Rigorous proof}
In this section, we will prove rigorously the homogenization result by asymptotic analysis in Section~3.  
For that purpose, we need the following assumption:
\begin{equation}
\label{e:assump}
\nu:=\max_{y}\Big|\cos(\theta_s^\eps+\theta_g^\eps)\sqrt{1+(\partial_y\phi_\eps)^2}\Big|<1.
\end{equation}
This assumption implies that the two-phase flow system in partial wetting regime. In addition,
to avoid technical complexity, we  assume  $h(Y,Z)\leq 0$, so that $\phi_\eps<0$.

The main result is the following theorem.
\begin{theorem}\label{theo:main}
Let $u_\eps\in C^2(D^\eps)$ be the solution of \eqref{e:pb3} and  $u_0\in C^2(0,1)$ be the homogenized function of $u_\eps$ given by \eqref{e:homo} and \eqref{e:ModifiedWC}.
 Then we have 
\begin{align}
&\max_{x\in(0,1)} \big|\bar{u}_\eps(x)-u_0(x)\big|\leq C_1\eps, \label{e:est1}
\\
&\max_{y\in (0,\eps)} \|{u}_\eps(x,y)-u_0(x)\|_{L^1(0,1)}\leq C_2\eps,\label{e:est2}
\end{align} 
 for two  constants $C_1$ and $C_2$ independent of $\eps$. Here $\bar{u}_\eps(x):=\aint_0^\eps u_\eps(x,y)\d y$.
\end{theorem}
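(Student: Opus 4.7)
The plan is to exploit the variational nature of \eqref{e:pb3}: the minimal surface equation together with the Young condition on $L_p^\eps$ are the Euler--Lagrange equations for the total interfacial energy
\[
F^\eps(u) := \int_{D^\eps(u)} \sqrt{1+|\nabla u|^2}\,\d x\,\d y \;-\; \int_{\Sigma^\eps(u)} \cos\theta_s\,\d A,
\]
where $\Sigma^\eps(u)\subset S^\eps$ denotes the portion of the rough solid lying below the graph of $u$. Under the partial-wetting assumption $\nu<1$ and the structural assumption $h\leq 0$, this functional is coercive on the admissible class of $\eps$-periodic graphs satisfying $u(1,\cdot)=0$, and the critical point $u_\eps$ supplied by hypothesis can be upgraded to a local minimizer in that class.

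The auxiliary minimization problem promised in the Introduction is then obtained by plugging a hand-built competitor, modeled on the two-scale ansatz of Section~3, into $F^\eps$. Explicitly, I would set
\[
v_\eps(x,y) := u_0(x) + \eps\, \hat u_1\!\bigl(\tfrac{x}{\eps},\tfrac{y}{\eps}\bigr)\,\chi\!\bigl(\tfrac{x}{\eps}\bigr) + \eps\, r_\eps(x,y),
\]
where $\hat u_1$ is the inner corrector solving the cell problem \eqref{e:in1} with the microscopic Young condition on $\hat L_p$, $\chi$ is a smooth cutoff truncating the corrector outside an $O(1)$-wide boundary layer in the fast variable, and $r_\eps$ is an $O(\eps)$ residual restoring $\eps$-periodicity in $y$ together with the boundary condition $v_\eps(1,\cdot)=0$. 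The matching relation \eqref{e:match} guarantees that $v_\eps$ glues the inner and outer expansions to leading order, and a direct expansion---using the divergence identity \eqref{e:matchBC} to convert the wetted-area integral into the line integral appearing in \eqref{e:ModifiedWC}---yields $F^\eps(v_\eps) = a\sqrt{1+k^2} - a\cos\theta_a + O(\eps)$.

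Combining the minimality inequality $F^\eps(u_\eps)\leq F^\eps(v_\eps)$ with a matching lower bound based on the uniform strict convexity of $p\mapsto \sqrt{1+|p|^2}$ on bounded sets (the a priori $L^\infty$ bound on $|\nabla u_\eps|$ being supplied by $\nu<1$) would deliver
\[
\int_{D^\eps} \bigl|\nabla(u_\eps-v_\eps)\bigr|^2\,\d x\,\d y \;\leq\; C\eps,
\]
modulo contact-line contributions that are absorbed using $\nu<1$. A slicewise Poincar\'e inequality in $y$ (enabled by $\eps$-periodicity) combined with integration in $x$ from the Dirichlet boundary $x=1$ then transforms this $H^1$ bound into the pointwise estimate \eqref{e:est1} on the $y$-average $\bar u_\eps - u_0$ and the slicewise $L^1$ estimate \eqref{e:est2}.

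The hard part will be the careful bookkeeping in the boundary-layer energy: showing that $F^\eps(v_\eps)$ produces the apparent angle $\theta_a$ given by \eqref{e:ModifiedWC}, rather than the classical Wenzel or Cassie angle, requires expanding $\int_{\Sigma^\eps(v_\eps)}\cos\theta_s\,\d A$ to order $\eps$ and recognizing the line average through the cell-problem divergence identity; equally delicate is verifying that $u_\eps$---a priori only a critical point---is genuinely energy-comparable to $v_\eps$ in the correct local class. Once the $O(\eps)$ energy gap is established, the strict-convexity lower bound and the passage from $H^1$ control to the slicewise estimates follow by standard arguments.
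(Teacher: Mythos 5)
There is a genuine gap, and it sits exactly at the point you flag as ``delicate'': the comparison $F^\eps(u_\eps)\leq F^\eps(v_\eps)$ is not available for the functional you propose. The full interfacial energy $F^\eps$, with the wetted-area term $\int_{\Sigma^\eps(u)}\cos\theta_s\,\d A$, is the functional whose \emph{global} minimizers produce the classical Wenzel/Cassie (area-averaged) angles; the solution $u_\eps$ of \eqref{e:pb3} is in general only a critical point, and the entire point of the paper is that its apparent angle is the line average \eqref{e:ModifiedWC}, which differs from the area average. Consequently $F^\eps(u_\eps)$ can be strictly larger than $F^\eps(v_\eps)$ for a competitor whose contact line sits elsewhere. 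Upgrading $u_\eps$ to a \emph{local} minimizer does not rescue the argument, because your competitor $v_\eps$ is built from $u_0$, and you have no a priori information that $v_\eps$ lies in the (unquantified, $\eps$-dependent) neighborhood where local minimality applies --- that closeness is precisely the conclusion \eqref{e:est1}--\eqref{e:est2} you are trying to prove. The proposed lower bound fails for the same structural reason: although $p\mapsto\sqrt{1+|p|^2}$ is strictly convex, $F^\eps$ itself is \emph{not} convex in $u$, since both the integration domain $D^\eps(u)$ and the wetted region $\Sigma^\eps(u)$ move with $u$; the resulting energy landscape has many local minima (this is contact angle hysteresis), so no convexity-based coercivity of the energy gap around $u_\eps$ holds. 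A secondary gap is that your construction requires solving the cell problem \eqref{e:in1} with the microscopic Young condition and the matching condition at infinity; existence of the corrector $\hat u_1$ is itself nontrivial and is nowhere established.

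The missing idea --- and the paper's actual device --- is to \emph{freeze the contact line of the given solution}. One defines, for the specific $u_\eps$, the domain $D^{u_\eps}$ bounded on the left by $x=\phi^{u_\eps}_\eps(y)$ and the auxiliary functional \eqref{e:functional}, in which the wetted-area term is replaced by the line integral $-\aint_0^\eps v(\phi^{u_\eps}_\eps(y),y)\cos(\theta_s^{u_\eps}+\theta_g^{u_\eps})\sqrt{1+(\partial_y\phi^{u_\eps}_\eps)^2}\,\d y$. This boundary term is \emph{linear} in $v$, so $\mathcal{E}^{u_\eps}$ is strictly convex on $\mathrm{V}$ and $u_\eps$ is its unique \emph{global} minimizer (Lemma~\ref{lem1}); the angle appearing in its homogenized counterpart $E$ is then automatically the line average \eqref{e:ModifiedWC} along the actual contact line of $u_\eps$, with no need for a corrector. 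The paper then compares $\mathcal{E}^{u_\eps}(u_\eps)$ with $E(\bar u_\eps)$ and $E(u_0)$ via Jensen's inequality, a translation argument near the boundary, and the assumption $\nu<1$ from \eqref{e:assump}, and converts the $O(\eps)$ energy gap into \eqref{e:est1} by a piecewise-linear comparison function rather than an $H^1$ bound. Your outer skeleton (energy comparison, then a Poincar\'e-type passage to pointwise estimates) is sound, but without the contact-line-freezing step the two inequalities at the heart of your argument are both invalid.
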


The difficulty of the proof of the theorem relies on the fact that the solution of \eqref{e:pb3} is not unique.
Our proof of  Theorem~\ref{theo:main} is based on the following idea.
For any specific solution $u_\eps$ of the equation~\eqref{e:pb3},
we will establish an auxiliary variational problem,  which
has a unique minimizer. Then we utilize the variational  problem
to prove Theorem~\ref{theo:main}. 

  \begin{figure}[htb!]
\vspace*{-1mm}
    \centering
  \resizebox{!}{6.5cm}
    {\includegraphics[height=4cm]{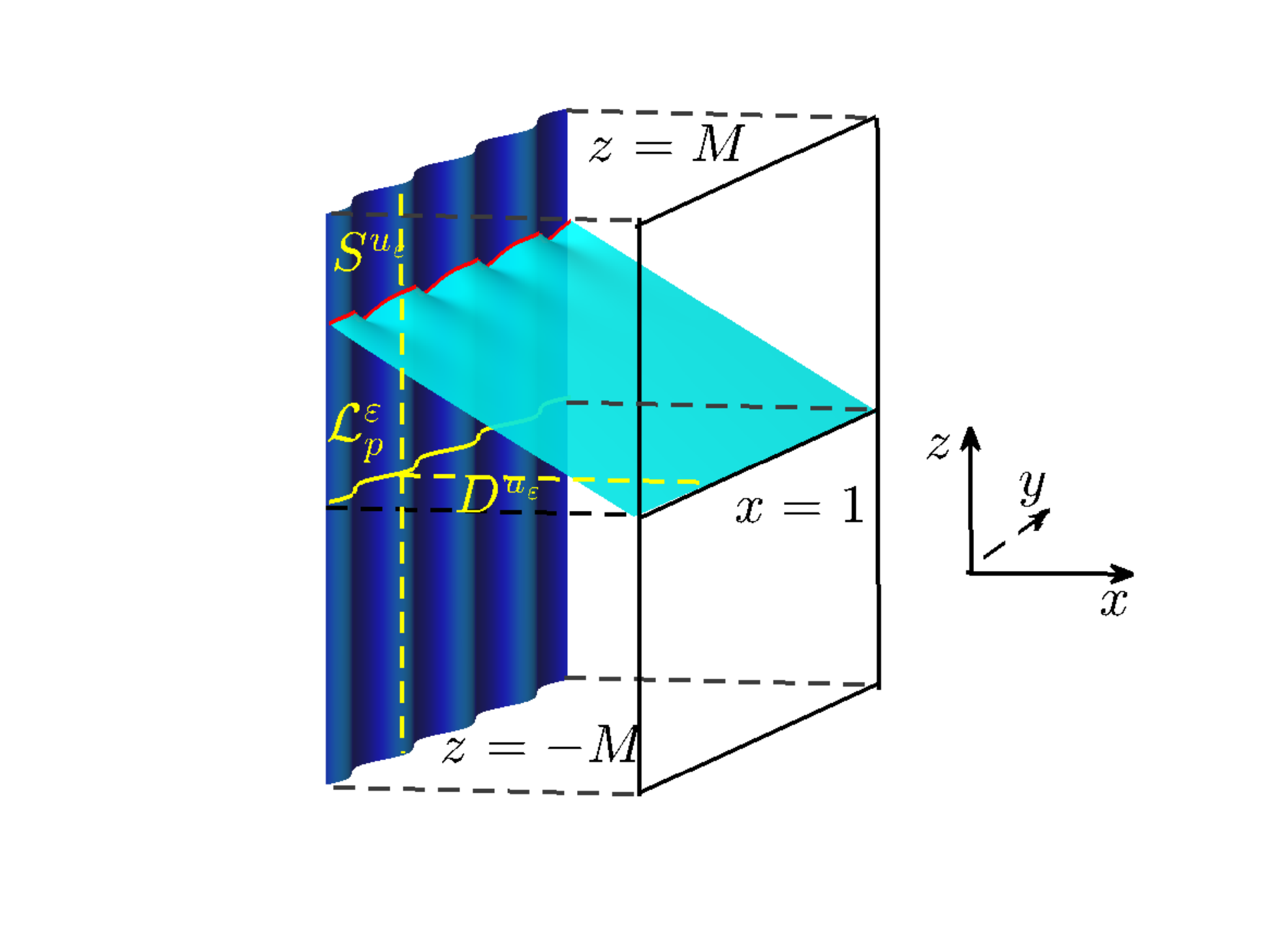}}
    \vspace*{-5mm}
    \caption{Rough surface and the fluid-fluid interfaces}    \lbl{fig:4.1}
 \end{figure}
Hereinafter, we assume that $u_\eps$ is one specific solution of the equation~\eqref{e:pb3}. 
The contact line $L^\eps$ is given by $\{x=\phi^{u_\eps}_\eps(y),z=\psi_\eps^{u_\eps}(y)\}$. Here,
to explicitly show the dependence in $u_\eps$, we use $\phi^{u_\eps}_\eps$ and $\psi^{u_\eps}_\eps$ instead of $\phi_\eps$ and $\psi_\eps$
in previous sections. Similarly, we denote $\theta^{u_\eps}_s(y):=\theta_s(y/\eps,\psi_{\eps}(y))$
and $\theta_g^{u_\eps}:=\theta_g(y/\eps,\psi_{\eps}(y))$, instead of $\theta_s^\eps$ and $\theta_g^\eps$.
We denote 
\begin{equation*}
S^{u_\eps}:=\{(x,y,z) | x=\phi^{u_\eps}_\eps(y) ,0< y<\eps, -M<z<M\}
\end{equation*}
a wave-like solid surface as shown in Fig.~\ref{fig:4.1}, and 
$$D^{u_\eps}:=\{(x,y) | \phi^{u_\eps}_\eps(y)<x<1,0<y<\eps\}$$
with the left boundary 
$$\mathcal{L}^{u_\eps}_p:=\{(x,y) | x=\phi^{u_\eps}_\eps(y),0<y<\eps\}.$$
Introduce a functional space 
\[
\mathrm{V}:=\{
v\in W^{1,2}(D^{u_\eps}) | v \hbox{ satisfies the periodic condition in $y$ and } v(1,y)=0.
\}
\]
For any $v\in\mathrm{V}$, we define an energy functional
\begin{equation}\label{e:functional}
\mathcal{E}^{u_\eps}(v)=\frac{1}{\eps}\int_{D^{u_\eps}}\!\!\!\!\sqrt{1+|\nabla v|^2}\d x\d y-
\aint_{0}^\eps {v(\phi^{u_\eps}_\eps(y),y)}\cos(\theta_s^{u_\eps}+\theta_g^{u_\eps})
\sqrt{1+(\partial_y\phi^{u_\eps}_\eps)^2}\d y.
\end{equation}
We  define a  variational  problem as follows,
\begin{equation}\label{e:aux}
\min_{v\in\mathrm{V}}\mathcal{E}^{u_\eps}(v).
\end{equation}

For the problem~\eqref{e:aux}, we first have the following lemma.
\begin{lemma}\label{lem1}
Let $u_\eps\in C^2(D^{u_\eps})$ be a solution of \eqref{e:pb3}, and $\mathcal{E}^{u_\eps}$ be defined in \eqref{e:functional}.
 Then $u_\eps$ is the  unique minimizer of the problem \eqref{e:aux}.
\end{lemma}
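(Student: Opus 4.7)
The plan is to exploit strict convexity of $\mathcal{E}^{u_\eps}$ on the affine space $\mathrm{V}$ and verify that $u_\eps$ is a critical point; the two facts together force $u_\eps$ to be the unique minimizer. Strict convexity is immediate: the integrand $\sqrt{1+|p|^2}$ is strictly convex in $p\in\mathbb{R}^2$, so the area term is strictly convex in $v$, while the contact-line trace term is linear in $v$. Consequently, if $u_\eps$ is critical, then for any $v\in\mathrm{V}$ with $v\neq u_\eps$ the scalar function $g(t):=\mathcal{E}^{u_\eps}(u_\eps+t(v-u_\eps))$ is strictly convex with $g'(0)=0$, hence $g(1)>g(0)$, i.e.\ $\mathcal{E}^{u_\eps}(v)>\mathcal{E}^{u_\eps}(u_\eps)$. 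So no existence argument is needed as a separate step.

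To verify criticality, I would take $\phi\in\mathrm{V}$ and compute the first variation at $u_\eps$. The area term contributes $\frac{1}{\eps}\int_{D^{u_\eps}}\frac{\nabla u_\eps\cdot\nabla\phi}{\sqrt{1+|\nabla u_\eps|^2}}\,\d x\,\d y$. Integration by parts produces an interior divergence $\div\bigl(\frac{\nabla u_\eps}{\sqrt{1+|\nabla u_\eps|^2}}\bigr)$, which vanishes by the minimal-surface PDE in \eqref{e:pb3}, plus boundary terms on $\partial D^{u_\eps}$. The right edge at $x=1$ contributes nothing because $\phi(1,y)=0$, and the edges $y=0$ and $y=\eps$ cancel by periodicity. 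Only the contact line $\mathcal{L}_p^{u_\eps}$ remains; there the outward unit 2D normal to $D^{u_\eps}$ is the $xy$-projection of $-\mathbf{m}_L^\eps$, and the arclength element is $\sqrt{1+(\partial_y\phi_\eps^{u_\eps})^2}\,\d y$. This rewrites the surviving boundary integral as $\aint_0^\eps \phi(\phi_\eps^{u_\eps},y)(\mathbf{n}_\Gamma^\eps\cdot\mathbf{m}_L^\eps)\sqrt{1+(\partial_y\phi_\eps^{u_\eps})^2}\,\d y$, up to the sign forced by the choice of outward normal. Using the contact-angle identity \eqref{e:innproduct}, $\mathbf{n}_\Gamma^\eps\cdot\mathbf{m}_L^\eps$ is precisely the cosine factor appearing in the linear part of $\mathcal{E}^{u_\eps}$, so the two pieces of the first variation cancel and $u_\eps$ is critical.

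The main delicate point is the bookkeeping on the contact line: identifying the outward 2D normal of $D^{u_\eps}$ with the projection of $-\mathbf{m}_L^\eps$, carrying the arclength factor $\sqrt{1+(\partial_y\phi_\eps^{u_\eps})^2}$ consistently, and making sure the sign of the cosine produced by integration by parts matches that prescribed in the definition of $\mathcal{E}^{u_\eps}$. Once this algebra is reconciled, no further analytic input is needed: $u_\eps\in C^2(D^{u_\eps})$ legitimises every pointwise manipulation, and strict convexity together with the critical-point identity gives both uniqueness and minimality in one stroke.
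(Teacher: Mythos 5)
Your proposal is correct and follows essentially the same route as the paper: the paper likewise computes the first variation, integrates by parts to identify the Euler--Lagrange system (the minimal-surface equation plus the contact-line condition $\mbf n_\Gamma^\eps\cdot\mbf m_L^\eps=\cos(\theta_s^{u_\eps}+\theta_g^{u_\eps})$, which it matches to the Young condition in \eqref{e:pb3}), and then invokes strict convexity of $\sqrt{1+|p|^2}$ to conclude uniqueness of the minimizer. The only difference is presentational (you lead with convexity, the paper leads with the criticality computation), and the sign bookkeeping you flag on the contact line is indeed the one delicate point, since \eqref{e:innproduct} and \eqref{e:functional} use opposite sign conventions for $\theta_g$.
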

\begin{proof}
The proof of the lemma includes two steps. We first prove that $u_\eps$ satisfies 
the Euler-Lagrangian equation of \eqref{e:aux}, then we prove the problem \eqref{e:aux} is convex.

{\it Step 1}. The proof of the first statement is based on a standard argument.  
Suppose $u\in \mathrm{V}$ be a (local) minimizer of $\mathcal{E}^{u_\eps}$.
Then for any $w\in C^\infty(D)$ such that $w(1,y)=0$ and the periodic condition in $y$, we have
 $$\mathcal{E}^{u_\eps}(u)\leq \mathcal{E}^{u_\eps}(u+t w),\qquad\qquad \hbox{ if $|t|$ small enough}.$$
Denote $I(t)=\mathcal{E}^{u_\eps}(u+t w)$, we then have $I'(0)=0$. Notice that
\begin{align*}
I(t)&=\aint_0^{\eps}\int_{\phi^{u_\eps}_\eps}^1\!\!\sqrt{1+|\nabla u+t\nabla w|^2}\d x\d y 
\\
&\quad-\aint_{0}^{\eps}\sqrt{1+|\partial_y \phi^{u_\eps}_\eps|^2}\cos(\theta_Y^{u_\eps}+\theta_g^{u_\eps})(u(\phi^{u_\eps}_\eps,y)+t w(\phi^{u_\eps}_\eps,y))\d y.\nonumber\\
\end{align*} 
 Direct computations give
\begin{align}
I'(0)=&\aint_0^{\eps}\int_{\phi^{u_\eps}}^1\frac{\nabla u\cdot\nabla w}{\sqrt{1+|\nabla u|^2}}\d x\d y
&-\aint_0^{\eps}\sqrt{1+|\partial_y\phi^{u_\eps}|^2}\cos(\theta_Y^{u_\eps}+\theta_g^{u_\eps}) w(\phi^{u_\eps},y)\d y
\end{align}
For the first term in the right hand side, integration by part leads to
\begin{align}
&\aint_0^{\eps}\int_{\phi^{u_\eps}(y)}^1\frac{\nabla u\cdot\nabla w}{\sqrt{1+|\nabla u|^2}}\d x\d y\nonumber\\
=&
-\frac{1}{\eps}\int_{\mathcal{L}_{p}^{u_\eps}}\frac{\nabla u}{\sqrt{1+|\nabla u|^2}}\cdot \frac{(1,-\partial_Y\phi_\eps^{u_\eps})}
{\sqrt{1+(\partial_Y\phi^{u_\eps}_\eps)^2}} w\d s-\aint_0^{\eps}\int_{\phi_\eps^{u_\eps}}^1
\div\left(\frac{\nabla u}{\sqrt{1+|\nabla u|^2}}\right)w \d x\d y\nonumber\\
=&\aint_0^\eps\sqrt{1+(\partial_y \phi^{u_\eps} )^2}\mbf n_{\Gamma}^{\eps}\cdot\mbf m^{\eps}_{L} w\d y-\aint_0^{\eps}\int_{\phi^{u_\eps}}^1
\div\left(\frac{\nabla u}{\sqrt{1+|\nabla u|^2}}\right)w \d x\d y
\end{align}
where $\mbf n_\Gamma^{\eps}$  and  $\mbf m_{L}^{\eps}$ are defined in \eqref{e:normal} and \eqref{e:normalCL}. Therefore, $I'(0)=0$ 
implies 
\begin{align*}
-\aint_0^{\eps}\int_{\phi_\eps^{u_\eps}}^1
&\div\left(\frac{\nabla u}{\sqrt{1+|\nabla u|^2}}\right)w \d x\d y\\
&+
\aint_0^\eps\sqrt{1+(\partial_y \phi_\eps^{ u_\eps} )^2}(\mbf n^\eps_{\Gamma}\cdot\mbf m^\eps_{L}-\cos(\theta_Y^{u_\eps}+\theta_g^{u_\eps}) ) w\d y=0,
\end{align*}
for all $w$. 
Therefore, the Euler-Lagrangian equation of the problem~\eqref{e:aux} is given by
\begin{align}
&\div\left(\frac{\nabla u}{\sqrt{1+|\nabla u|^2}}\right)=0, \qquad \hbox{ in } D^{u_\eps} \label{e:curvature}\\
&\mbf n^\eps_{\Gamma}\cdot\mbf m^\eps_{L}-\cos(\theta_Y^{u_\eps}+\theta_g^{u_\eps})=0, \qquad \hbox{ on } \mathcal{L}^{u_\eps}_p.
\label{e:tempbnd}
\end{align}
Notice the definition of $\theta_g^{u_\eps}$ in \eqref{e:geoAng}, the equation \eqref{e:tempbnd} is equivalent to 
\begin{equation*}
\mbf n_{\Gamma}^{\eps}\cdot\mathbf{n}_{S}^{\eps}=\cos \theta_Y^{u_\eps},  \qquad \hbox{ on } \mathcal{L}^{u_\eps}_p.
\end{equation*}
By equation~\eqref{e:pb3}, it is easy to see that $u_\eps$ satisfies \eqref{e:curvature} and the above boundary condition.

(2). To finish the proof of the lemma, we need only prove the convexity of
the functional $\mathcal{E}^{u_\eps}$. Notice that $\sqrt{1+t^2}$ is strictly convex. Then for any $u_1,u_2\in \mathrm{V}$, $u_1\neq u_2$, and $0<t<1$, let $u^t=t u_1+(1-t)u_2$. Then we have
\begin{align}
\mathcal{E}^{u_\eps}(u^t)&=\aint_0^\eps\int_{\phi^{u_\eps}}^1\sqrt{1+|t\nabla u_1+(1-t)\nabla u_2|^2}\d x\d y\nonumber\\
&\quad
-\aint_0^\eps\sqrt{1+(\partial_y \phi^{u_\eps} )^2}\cos(\theta_Y^{u_\eps}+\theta_g^{u_\eps})(tu_1+(1-t)u_2)\d y \nonumber\\
&<t\mathcal{E}^{u_\eps}(u_1)+(1-t)\mathcal{E}^{u_\eps}(u_2).\nonumber
\end{align}
\end{proof}

In the following, we  estimate the difference between $u_\eps$ and $u_0$ by utilizing the energy minimizing problem~\eqref{e:aux}. Similar technique has been used to verify the Wenzel equation in \cite{chenWangXu2013}.

We firstly estimate the difference between $v\in \mathrm{V}$ and its average:
\begin{align}\label{e:ave}
\bar{v}:=\aint_0^\eps v(x,y)\d y.
\end{align}
This is given by the following lemma.
\begin{lemma}\label{lem2}
For any $v \in\mathrm{V}$, $\bar{v}$ is defined as \eqref{e:ave}, we have
\begin{align*}
\max_{y\in(0,\eps)} \|v-\bar{v}\|_{L^1(0,1)}\leq\frac{\mathcal{E}^{u_\eps}(v)}{1-\nu}\eps.
\end{align*} 
where $\nu$ is defined in~\eqref{e:assump}.
\end{lemma}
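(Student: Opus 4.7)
The plan is to exploit the Dirichlet condition $v(1,y)=0$ to recast the contact-line term in $\mathcal{E}^{u_\eps}$ as a domain integral, and then to estimate $\int_{D^{u_\eps}}|\partial_y v|$ directly. First, for any fixed $y_0\in(0,\eps)$ and $x\in(0,1)$, since the $y$-average of $v-\bar v$ vanishes,
$$|v(x,y_0)-\bar v(x)| = \Big|\aint_0^\eps \big(v(x,y_0)-v(x,y)\big)\,\d y\Big| \leq \int_0^\eps |\partial_y v(x,s)|\,\d s.$$
Integrating in $x$ and using $\phi_\eps^{u_\eps}<0$, so that $(0,1)\times(0,\eps)\subseteq D^{u_\eps}$, gives
$$\|v(\cdot,y_0)-\bar v\|_{L^1(0,1)} \leq \int_{D^{u_\eps}} |\partial_y v|\,\d x\,\d y.$$

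Second, I would rewrite $\mathcal{E}^{u_\eps}(v)$ as a single area-type integral. Using $v(1,y)=0$ and the fundamental theorem of calculus in $x$,
$$v(\phi_\eps^{u_\eps}(y),y) = -\int_{\phi_\eps^{u_\eps}(y)}^1 \partial_x v(x,y)\,\d x,$$
so the boundary contribution in $\mathcal{E}^{u_\eps}(v)$ becomes a domain integral and
$$\mathcal{E}^{u_\eps}(v) = \frac{1}{\eps}\int_{D^{u_\eps}} \Big[\sqrt{1+|\nabla v|^2} + \alpha(y)\,\partial_x v\Big]\,\d x\,\d y,$$
where $\alpha(y):=\cos(\theta_s^{u_\eps}(y)+\theta_g^{u_\eps}(y))\sqrt{1+(\partial_y\phi_\eps^{u_\eps})^2}$ satisfies $|\alpha(y)|\leq\nu$ by \eqref{e:assump}. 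Since $|\partial_x v|\leq\sqrt{1+|\nabla v|^2}$ pointwise,
$$\sqrt{1+|\nabla v|^2}+\alpha(y)\,\partial_x v \;\geq\; (1-\nu)\sqrt{1+|\nabla v|^2} \;\geq\; (1-\nu)|\partial_y v|.$$
Integrating over $D^{u_\eps}$ and combining with the first step yields $\eps\,\mathcal{E}^{u_\eps}(v)\geq(1-\nu)\int_{D^{u_\eps}}|\partial_y v|\geq(1-\nu)\|v(\cdot,y_0)-\bar v\|_{L^1(0,1)}$ for every $y_0$, which is the claimed estimate.

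The main creative step is the rewriting in the second paragraph: absorbing the oscillatory contact-line energy into the area integral via the Dirichlet condition at $x=1$. Once this is done, the hypothesis $\nu<1$ is exactly what ensures the sign-indefinite cross term $\alpha\,\partial_x v$ is dominated by $\sqrt{1+|\nabla v|^2}$, leaving a definite multiple of $|\partial_y v|$ that controls the $y$-oscillation of $v$. The simplifying assumption $h\leq 0$ (hence $\phi_\eps<0$) is used in the first step so that $(0,1)\times(0,\eps)\subseteq D^{u_\eps}$; without it, additional care near $x=0$ would be required to recover the $L^1(0,1)$ norm on the left-hand side.
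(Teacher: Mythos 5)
Your proof is correct and follows essentially the same route as the paper: both use the Dirichlet condition $v(1,y)=0$ and the fundamental theorem of calculus in $x$ to control the contact-line term by $\nu$ times the area integral $\aint_0^\eps\int_{\phi_\eps^{u_\eps}}^1\sqrt{1+|\nabla v|^2}\,\d x\,\d y$ (yielding $\mathcal{E}^{u_\eps}(v)\geq(1-\nu)$ times that integral), and both combine this with the same Poincar\'e-type estimate in $y$ that bounds $\|v(\cdot,y_0)-\bar v\|_{L^1(0,1)}$ by $\int_{D^{u_\eps}}|\partial_y v|$. Your pointwise phrasing of the absorption step (writing the energy density as $\sqrt{1+|\nabla v|^2}+\alpha(y)\partial_x v$) is a slightly cleaner packaging of the same inequality.
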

\begin{proof}
For each $y\in(0,\eps)$, we have
\begin{align}
\int_0^1|\bar{v}(x)-v(x,y)|\d x 
&= \int_0^1 |\aint_{0}^\eps v(x,\hat{y})-v(x,y)\d \hat{y}|\d x\nonumber\\
&\leq \int_0^1\aint_0^\eps\int_0^{\eps}|\partial_y v(x,\tilde{y})|\d \tilde{y}\d \hat{y}\d x\nonumber\\
&=\int_0^1\int_0^{\eps}|\partial_y v(x,\tilde{y})|\d \tilde{y}\d x \leq\eps\aint_0^\eps\int_0^1\sqrt{1+|\nabla v|}\d x\d \tilde{y}.\label{e:aveTMP}
\end{align}
Notice that
\begin{align*}
&\Big|\aint_0^\eps \sqrt{1+(\partial_y\phi^{u_\eps})^2} v \cos(\theta_Y^{u_\eps}+\theta_g^{u_\eps}) \d y \Big| \\
\leq & \nu\aint_0^\eps |v(\phi^{u_\eps}(y),y)|\d y=\nu\aint_0^{\eps}|\int_{\phi^{u_\eps}}^1\partial_x v(x,y)\d x|\d y \\
\leq &\nu\aint_0^{\eps}\int_{\phi^{u_\eps}}^1 |\partial_x v(x,y)|\d x\d y
<\nu\int_0^1\int_{\phi^{u_\eps}}^1\sqrt{1+|\nabla v|^2}\d x \d y.
\end{align*}
This leads to
\begin{align}\label{e:engTMP}
\mathcal{E}^{u_\eps}(v)>(1-\nu)\aint_0^\eps\int_{\phi^{u_\eps}(y)}^1\sqrt{1+|\nabla v|^2}\d x\d y.
\end{align}
Combining the estimate \eqref{e:aveTMP} and \eqref{e:engTMP}, we  finish the proof of the lemma.
\end{proof}

The next lemma is a technical result, 
which estimates the difference between  $\bar{v}(0)$ and $v$ on the left boundary.
\begin{lemma}\label{lem:nearBnd}
For any $v \in\mathrm{V}$, we have
\begin{align}\label{e:nearBnd}
\aint_0^\eps|v(\phi^{u_\eps}(y),y)-\bar{v}(0)|\d y\leq \frac{2}{1-\nu}\Big( (1+\|h\|_{\infty})\eps+\mathcal{E}^{u_\eps}(v_\eps) - C_\eps \Big),
\end{align}
with $C_\eps=\inf_{v}\mathcal{E}^{u_\eps}(v)=\mathcal{E}^{u_\eps}(u_\eps)$,
and $\|h\|_{\infty}=\max_{Y,Z}h(Y,Z)$.
\end{lemma}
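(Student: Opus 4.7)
The plan is to bound $\aint_0^\eps |v(\phi_\eps^{u_\eps}(y),y)-\bar v(0)|\d y$ by a triangle-inequality decomposition
\[
|v(\phi_\eps^{u_\eps}(y),y)-\bar v(0)|\leq |v(\phi_\eps^{u_\eps}(y),y)-v(0,y)| + |v(0,y)-\bar v(0)|,
\]
estimating each piece with the elementary bound $|\partial_x v|,|\partial_y v|\leq \sqrt{1+|\nabla v|^2}$ combined with the energy estimate derived in the proof of Lemma~\ref{lem2}, namely $\mathcal{E}^{u_\eps}(v)\geq (1-\nu)\aint_0^\eps\int_{\phi_\eps^{u_\eps}}^1\sqrt{1+|\nabla v|^2}\d x\d y$. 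First, by the fundamental theorem of calculus,
\[
\aint_0^\eps |v(\phi_\eps^{u_\eps}(y),y)-v(0,y)|\d y \leq \aint_0^\eps\int_{\phi_\eps^{u_\eps}(y)}^0\sqrt{1+|\nabla v|^2}\d x\d y,
\]
and an analogous estimate, via the periodicity of $v$ in $y$, controls the second piece by a similar bulk integral.

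The crucial step is to upgrade the upper bound from $\mathcal{E}^{u_\eps}(v)$ to the deficit $\mathcal{E}^{u_\eps}(v)-C_\eps$. For this I would construct an auxiliary competitor $w\in\mathrm{V}$ whose energy is close to $C_\eps$. A natural choice is $w(x,y):=\bar v(x)$ for $x\in[0,1]$, extended by the constant $w(x,y):=\bar v(0)$ on the rough strip $x\in[\phi_\eps^{u_\eps}(y),0)$; this lies in $\mathrm{V}$ and satisfies $w(\phi_\eps^{u_\eps}(y),y)=\bar v(0)$. By Jensen's inequality applied to the convex integrand $\sqrt{1+t^2}$,
\[
\int_0^1\sqrt{1+\bar v'(x)^2}\d x\leq \aint_0^\eps\int_0^1\sqrt{1+|\nabla v|^2}\d x\d y,
\]
and using $0\leq -\phi_\eps^{u_\eps}(y)\leq \eps\|h\|_\infty$ together with $|J(y)|\leq\nu$ where $J(y):=\cos(\theta_s^{u_\eps}+\theta_g^{u_\eps})\sqrt{1+(\partial_y\phi_\eps^{u_\eps})^2}$, one obtains
\[
\mathcal{E}^{u_\eps}(w)\leq \aint_0^\eps\int_0^1\sqrt{1+|\nabla v|^2}\d x\d y + \eps\|h\|_\infty - \bar v(0)\aint_0^\eps J(y)\d y.
\]
Subtracting from $\mathcal{E}^{u_\eps}(v)$, using $\mathcal{E}^{u_\eps}(w)\geq C_\eps$ (from Lemma~\ref{lem1}) and $|J|\leq \nu$ to handle the boundary term, yields
\[
\aint_0^\eps\int_{\phi_\eps^{u_\eps}(y)}^0 \sqrt{1+|\nabla v|^2}\d x\d y\leq (\mathcal{E}^{u_\eps}(v)-C_\eps)+\eps\|h\|_\infty+\nu\aint_0^\eps|v(\phi_\eps^{u_\eps}(y),y)-\bar v(0)|\d y.
\]

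Combining this with the triangle-inequality decomposition and absorbing the $\nu\,\aint|v(\phi_\eps^{u_\eps}-\bar v(0)|\d y$ term to the left-hand side produces a bound with the $(1-\nu)^{-1}$ factor; the factor $2$ and the additive $(1+\|h\|_\infty)\eps$ absorb the two pieces of the triangle inequality together with the width $-\phi_\eps^{u_\eps}(y)\leq \eps\|h\|_\infty$ of the rough strip and the contribution of the extra ``$1$'' in $\sqrt{1+|\nabla v|^2}$. The main obstacle is controlling the second piece $\aint_0^\eps|v(0,y)-\bar v(0)|\d y$, because it involves the trace of $\partial_y v$ on $\{x=0\}$, which is not directly bounded by the bulk energy; I would circumvent this by an averaging argument in the spirit of Lemma~\ref{lem2} (writing $|v(0,y)-\bar v(0)|\leq \aint_0^\eps\int_0^\eps|\partial_y v(0,s)|\d s\d\tilde y$ and trading the trace for a bulk integral via integration in $x$ against the cutoff implicit in the definition of $w$), which is precisely where the factor $2$ in the statement arises.
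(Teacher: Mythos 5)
Your overall strategy---a triangle-inequality decomposition of $v(\phi^{u_\eps}_\eps(y),y)-\bar v(0)$ into pieces controlled by bulk integrals of $\sqrt{1+|\nabla v|^2}$, followed by a comparison with a competitor in $\mathrm{V}$ whose energy is at least $C_\eps$ in order to replace $\mathcal{E}^{u_\eps}(v)$ by the deficit $\mathcal{E}^{u_\eps}(v)-C_\eps$---is exactly the right family of ideas, and is what the paper does (its competitor is the translate $\mathrm{T}_\delta v$ with $\delta=(1+\|h\|_\infty)\eps$). However, as written your argument has two concrete gaps. First, the middle term $\aint_0^\eps|v(0,y)-\bar v(0)|\,\d y$ of your decomposition is a trace quantity: bounding it requires $\int_0^\eps|\partial_y v(0,s)|\,\d s$ on the line $\{x=0\}$, which is not controlled by any bulk integral of $|\nabla v|$. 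You acknowledge this, but the only genuine fix is to replace the point value $v(0,y)$ by an $x$-average over $(0,\eps)$ (this is precisely the paper's three-term split into $I_1,I_2,I_3$), and that fix converts the offending term into a bulk integral of $|\partial_y v|$ over the strip $(0,\eps)\times(0,\eps)$, i.e.\ over $x$ \emph{to the right of} $0$.

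This triggers the second, more serious gap: your competitor $w$ (equal to $\bar v(x)$ on $[0,1]$ and constant on the rough strip $x\in[\phi^{u_\eps}_\eps(y),0)$) localizes only the energy in $x\in(\phi^{u_\eps}_\eps,0)$. The Jensen step $\int_0^1\sqrt{1+\bar v'^2}\leq\aint_0^\eps\int_0^1\sqrt{1+|\nabla v|^2}$ discards exactly the $\partial_y v$ contribution on $(0,1)$, so the deficit $\mathcal{E}^{u_\eps}(v)-\mathcal{E}^{u_\eps}(w)$ gives you no handle on $\aint_0^\eps\int_0^\eps|\partial_y v|\,\d x\,\d y$, and the absorption step cannot close. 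The remedy is to push the cut of the competitor to $x=\phi^{u_\eps}_\eps+\delta$ with $\delta=(1+\|h\|_\infty)\eps\;(\geq \eps+|\phi^{u_\eps}_\eps|)$, so that the controlled strip $(\phi^{u_\eps}_\eps,\phi^{u_\eps}_\eps+\delta)$ contains all of $(\phi^{u_\eps}_\eps,\eps)$; this is exactly what the paper's translation operator $\mathrm{T}_\delta$ accomplishes, at the price of the extra additive term $\delta=(1+\|h\|_\infty)\eps$ appearing in the statement. With those two repairs your argument becomes essentially the paper's proof; without them the key inequality is not established.
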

\begin{proof}
Firstly, we have
\begin{align*}
\aint_0^\eps|v(\phi^{u_\eps}(y),y)-\bar{v}(0)|\d y\leq & 
\aint_0^\eps\Big|v(\phi^{u_\eps}(y),y)-\aint_0^\eps v(x,y)\d x \Big|\d y\\
&+\aint_0^\eps\Big |\aint_0^\eps (v(x,y) -\bar{v}(x))\d x\Big|\d y
+\Big|\aint_0^\eps \bar{v}(x)\d x-\bar{v}(0)\Big|\\
=:&I_1+I_2+I_3.
\end{align*}
We can estimate $I_i$ term by term. Notice that $\phi^{u_\eps}<0$, we have
\begin{align*}
I_1=\aint_0^\eps\Big|\aint_0^\eps v(\phi^{u_\eps}(y),y)-v(x,y)\d x \Big|\d y
&\leq \aint_0^\eps \aint_0^\eps \int_{\phi^{u_\eps}}^\eps|\partial_x v(\tilde{x},y)|\d \tilde{x}\d x \d y\\
&=\aint_0^\eps \int_{\phi^{u_\eps}}^\eps|\partial_x v(\tilde{x},y)|\d \tilde{x} \d y,
\end{align*}
\begin{align*}
I_2=\aint_0^\eps\Big |\aint_0^\eps \aint_0^\eps v(x,y) - {v}(x,\tilde{y})\d\tilde{y}\d x\Big|\d y\leq 
\aint_0^\eps \aint_0^\eps |\partial_y v (x,\hat{y})|\d\hat{y}\d x,
\end{align*}
and 
\begin{align*}
I_3=\Big|\aint_0^\eps \aint_0^\eps {v}(x,y)-{v}(0,y)\d y \d x\Big|
&\leq 
\aint_0^\eps \aint_0^\eps \int_0^{\eps} |\partial_x {v} (\hat x,y)|\d \hat{x}\d y \d x\\
&=\aint_0^\eps \int_0^{\eps} |\partial_x {v} (\hat x,y)|\d \hat{x}\d y.
\end{align*}
Then we have 
\begin{align}\label{e:tempI4}
\aint_0^\eps|v(\phi^{u_\eps}(y),y)-\bar{v}(0)|\d y\leq 2\aint_{0}^{\eps}\int_{\phi^{u_\eps}(y)}^\eps \sqrt{1+|\nabla v|^2}\d x \d y=:I_4.
\end{align}
We need only estimate the  term $I_4$.

For  $v\in \mathrm{V}$, we define a  function by translation
$$
\mathrm{T}_\delta v :=\left\{
\begin{array}{ll}
0 & \hbox{if } x\in[1-\delta,1],\\
v(x+\delta) & \hbox{if } x\in[\phi^{u_\eps}(y), 1-\delta].
\end{array}
\right.
$$
We can easy see that $\mathrm{T}_\delta v_\eps\in X$. Direct computations give
\begin{align*}
\mathcal{E}^{u_\eps}(v)-\mathcal{E}^{u_\eps}( \mathrm{T}_\delta v)=&
\aint_0^\eps\Big(\int_{\phi^{u_\eps}(y)}^{\phi^{u_\eps}+\delta}\sqrt{1+|\nabla v|^2}\d x -
\int_{1-\delta}^1\sqrt{1+|\nabla T_\delta v|^2}\d x \Big)  \d y \\
 &-
\aint_0^{\eps}\sqrt{1+(\partial_y\phi^{u_\eps})^2}(v(\phi^{u_\eps}(y),y)-v(\phi^{u_\eps}(y)+\delta,y )\cos(\theta_s^{u_\eps}-\theta_g^{u_\eps})\d y\\
\geq & \aint_0^\eps \int_{\phi^{u_\eps}(y)}^{\phi^{u_\eps}+\delta}\sqrt{1+|\nabla v|^2}\d x \d y
-\delta
-\nu \aint_0^{\eps}\int_{\phi^{u_\eps}(y)}^{\phi^{u_\eps}+\delta}|\partial_x v(x,y)|\d x\d y\\
\geq &(1-\nu ) \aint_0^\eps \int_{\phi^{u_\eps}(y)}^{\phi^{u_\eps}+\delta}\sqrt{1+|\nabla v|^2}\d x-\delta
\end{align*}
Setting $\delta=(1+\|h\|_{\infty})\eps$, and using $\mathcal{E}_\eps( \mathrm{T}_\delta v_\eps)\geq C_\eps$,
the above equation and \eqref{e:tempI4} implies \eqref{e:nearBnd}.
\end{proof}

We now introduce an energy minimizing problem for the homogenized problem. Define a functional space,
\begin{align*}
\mathrm{V}_1:=\{v\in W^{1,2}(0,1) : v(1)=0\}.
\end{align*}
For any $v(x)\in \mathrm{V}_1$, we define, 
\begin{equation}\label{e:homoEng}
E(v)=\int_0^1\sqrt{1+v_x^2}\d x- v(0)\cos\theta_a,
\end{equation}
where $\theta_a=\arccos\Big(\aint_0^\eps\sqrt{1+\partial_y\phi^{u_\eps}}\cos(\theta_Y^{u_\eps}+\theta_g^{u_\eps})\d y\Big).$
The following lemma bounds the energy $E(\bar{u}_\eps)$ by $\mathcal{E}^{u_\eps}(u_\eps)$.
\begin{lemma}\label{lem3}
Let $\bar{u}_\eps$ be the average of $u_\eps$. We have
\begin{align*}
E(\bar{u}_\eps)\leq \mathcal{E}^{u_\eps}(u_\eps)+\frac{2\nu}{1-\nu}(1+\|h\|_{\infty})\eps.
\end{align*}
\end{lemma}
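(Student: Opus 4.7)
The plan is to split both energies into an area piece (the $\sqrt{1+|\nabla v|^2}$ integral) and a boundary piece (the contact-line term), and then show (i) the area piece of $E(\bar u_\eps)$ is dominated by that of $\mathcal E^{u_\eps}(u_\eps)$ via Jensen's inequality, and (ii) the boundary discrepancy is controlled by Lemma~\ref{lem:nearBnd}, where the crucial cancellation is that Lemma~\ref{lem1} makes the $\mathcal E^{u_\eps}(u_\eps)-C_\eps$ term in that bound vanish identically.

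For the area piece, since $h\leq 0$ forces $\phi^{u_\eps}<0$, the rectangle $(0,1)\times(0,\eps)$ sits inside $D^{u_\eps}$. Differentiating under the average gives $(\bar u_\eps)_x(x)=\aint_0^\eps\partial_x u_\eps(x,y)\,\d y$ on $(0,1)$, so applying Jensen's inequality to the convex function $t\mapsto\sqrt{1+t^2}$ and then enlarging the $x$-interval from $(0,1)$ to $(\phi^{u_\eps}(y),1)$ yields
\begin{equation*}
\int_0^1\sqrt{1+(\bar u_\eps)_x^2}\,\d x \;\leq\; \aint_0^\eps\int_{\phi^{u_\eps}(y)}^1\sqrt{1+|\nabla u_\eps|^2}\,\d x\,\d y,
\end{equation*}
which is precisely the area piece of $\mathcal E^{u_\eps}(u_\eps)$. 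For the boundary piece, unfolding the definition of $\theta_a$ in the lemma statement writes $-\bar u_\eps(0)\cos\theta_a$ as $-\aint_0^\eps \bar u_\eps(0)\cos(\theta_s^{u_\eps}+\theta_g^{u_\eps})\sqrt{1+(\partial_y\phi^{u_\eps})^2}\,\d y$, so the discrepancy between the boundary pieces of $E(\bar u_\eps)$ and $\mathcal E^{u_\eps}(u_\eps)$ is exactly
\begin{equation*}
\aint_0^\eps\bigl(u_\eps(\phi^{u_\eps}(y),y)-\bar u_\eps(0)\bigr)\cos(\theta_s^{u_\eps}+\theta_g^{u_\eps})\sqrt{1+(\partial_y\phi^{u_\eps})^2}\,\d y,
\end{equation*}
which by assumption \eqref{e:assump} is bounded in absolute value by $\nu\aint_0^\eps|u_\eps(\phi^{u_\eps}(y),y)-\bar u_\eps(0)|\,\d y$.

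The last step is to invoke Lemma~\ref{lem:nearBnd} with $v=u_\eps$. Here Lemma~\ref{lem1} is decisive: it identifies $u_\eps$ as \emph{the} unique minimizer of $\mathcal E^{u_\eps}$, so $\mathcal E^{u_\eps}(u_\eps)-C_\eps=0$ and the right-hand side of \eqref{e:nearBnd} collapses to $\frac{2}{1-\nu}(1+\|h\|_\infty)\eps$. Multiplying by $\nu$ and combining with the (non-positive) area contribution yields the claimed inequality $E(\bar u_\eps)\leq\mathcal E^{u_\eps}(u_\eps)+\frac{2\nu}{1-\nu}(1+\|h\|_\infty)\eps$. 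The only real subtlety is the geometric bookkeeping around $\phi^{u_\eps}<0$, which is needed both so that Jensen applies pointwise on $(0,1)$ and for the domain inclusion $(0,1)\times(0,\eps)\subset D^{u_\eps}$ that makes the area comparison go the right way; everything else is a routine combination of the previous two lemmas.
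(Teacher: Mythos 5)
Your proposal is correct and follows essentially the same route as the paper: Jensen's inequality (the paper phrases it via the tangent-line characterization of convexity of $\sqrt{1+s^2}$) for the area term, the bound $|I_1-I_2|\leq\nu\aint_0^\eps|u_\eps(\phi^{u_\eps}(y),y)-\bar u_\eps(0)|\,\d y$ for the boundary term, and Lemma~\ref{lem:nearBnd} applied to $v=u_\eps$ with the cancellation $\mathcal{E}^{u_\eps}(u_\eps)-C_\eps=0$ from Lemma~\ref{lem1}. No gaps.
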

\begin{proof}
For each $x\in [0,1]$, it is easy to see that
\[
|\partial_x \bar{u}_\eps(x)|=\Big| \aint_0^\eps\partial_x u_\eps(x,y)\d y\Big|\leq \aint_0^\eps|\partial_x u_\eps(x,y)|\d y=:s_0.
\]
By convexity of $f(s)=\sqrt{1+s^2}$, we have
\begin{align*}
\aint_0^\eps\sqrt{1+|\partial_x u_\eps|^2}\d y=\aint_0^\eps f(|\partial_x u_\eps|)\d y
&\geq \aint_0^\eps f(s_0)+f'(s_0)(|\partial_x u_\eps|-s_0)\d y\\
&=f(s_0)=\sqrt{1+(\partial_x \bar{u}_\eps)^2}.
\end{align*}
This leads to
\begin{align}\label{e:temp1}
\aint_0^\eps\int_{\phi_\eps}^1\sqrt{1+|\nabla u_\eps|^2}\d x\d y\geq 
\aint_0^\eps\int_{0}^1\sqrt{1+|\partial_x u_\eps|^2}\d x\d y\geq \int_0^1\sqrt{1+|\partial_x \bar{u}_\eps|^2}\d x.
\end{align}

The surface term in $E(\bar{u}_\eps)$ 
is given by
\begin{align*}
I_1=-\bar{u}_\eps(0)\cos\theta_a=-\bar{u}_\eps(0)\aint_0^\eps \sqrt{1+(\partial_y\phi^{u_\eps})^2}\cos(\theta_Y^{u_\eps}+\theta_g^{u_\eps})\d y,
\end{align*}
and the surface energy term in $\mathcal{E}^{u_\eps}(u_\eps)$ is given by
\begin{align*}
I_2=-\aint_0^\eps \sqrt{1+(\partial_y\phi^{u_\eps})^2}\cos(\theta_Y^{u_\eps}+\theta_g^{u_\eps}) 
u_\eps(\phi^{u_\eps}(y),y)\d y.
\end{align*}
We easily see that
\begin{align*}
|I_1-I_2|&=\Big|\aint_0^\eps \sqrt{1+(\partial_y\phi^{u_\eps})^2}\cos(\theta_Y^{u_\eps}+\theta_g^{u_\eps}) (u_\eps(\phi^{u_\eps}(y),y)-\bar{u}_\eps(0))\d y \Big|\\
&\leq \nu\aint_0^\eps|u_\eps(\phi^{u_\eps}(y),y)-\bar{u}_\eps(0)|\d y.
\end{align*}
By Lemma~\ref{lem:nearBnd}, noticing that $C_\eps=\mathcal{E}^{u_\eps}(u_\eps)$,  we have
\[
|I_1-I_2|\leq \frac{2\nu}{1-\nu}(1+\|h\|_{\infty})\eps.
\]
Combining the equation with \eqref{e:temp1}, we finish the proof of the lemma.
\end{proof}

The next lemma characterize the minimizer of the functional $E(v)$.
\begin{lemma}\label{lem4}
Let   $u_0$ be the homogenized function of $u_\eps$ given by \eqref{e:homo} and \eqref{e:ModifiedWC}. Then $u_0$ is the unique minimizer of $E(v)$
in $\mathrm{V}_1$, and satisfies
\begin{align*}
C_\eps-\eps\|h\|_{\infty}\leq E(u_0)\leq 1,
\end{align*}
where $C_\eps=\inf_{v\in\mathrm{V}}\mathcal{E}^{u_\eps}(v)=\mathcal{E}^{u_\eps}(u_\eps)$.
\end{lemma}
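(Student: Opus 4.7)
The plan is to split the lemma into three parts: (a) $u_0$ satisfies the Euler--Lagrange system of $E$; (b) $E$ is strictly convex on $\mathrm{V}_1$, so (a) forces $u_0$ to be the unique minimizer; (c) the two sandwich bounds on $E(u_0)$.

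For (a), a routine first-variation calculation against test functions $w\in C^\infty([0,1])$ with $w(1)=0$, followed by integration by parts, yields
\begin{equation*}
\partial_x\!\left(\frac{v_x}{\sqrt{1+v_x^2}}\right)=0 \text{ in }(0,1),\qquad \frac{-v_x(0)}{\sqrt{1+v_x^2(0)}}=\cos\theta_a.
\end{equation*}
Since $u_0(x)=k(1-x)$ by \eqref{e:homo}, $u_0'\equiv -k$ makes the interior equation trivial, and the natural boundary condition reduces to $k/\sqrt{1+k^2}=\cos\theta_a$, which is precisely the definition of $k$ via \eqref{e:ModifiedWC}. For (b), $t\mapsto\sqrt{1+t^2}$ is strictly convex while $v\mapsto -v(0)\cos\theta_a$ is affine, so $E$ is strictly convex on $\mathrm{V}_1$; together with (a) this gives $u_0$ as the unique minimizer.

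For the upper bound in (c), a direct evaluation suffices:
\begin{equation*}
E(u_0)=\sqrt{1+k^2}-k\cdot\frac{k}{\sqrt{1+k^2}}=\frac{1}{\sqrt{1+k^2}}\leq 1.
\end{equation*}
For the lower bound, I would use $u_0$ itself, extended trivially as $\tilde u_0(x,y):=k(1-x)$ on the whole of $D^{u_\eps}$, as a competitor in the variational problem \eqref{e:aux}. By Lemma~\ref{lem1}, $C_\eps\leq \mathcal{E}^{u_\eps}(\tilde u_0)$. Because $|\nabla\tilde u_0|\equiv k$ is constant and $\tilde u_0(\phi^{u_\eps}(y),y)=k(1-\phi^{u_\eps}(y))$, expanding $\mathcal{E}^{u_\eps}(\tilde u_0)$ and invoking the definition of $\cos\theta_a$ yields
\begin{equation*}
\mathcal{E}^{u_\eps}(\tilde u_0)-E(u_0)=-\sqrt{1+k^2}\,\bar\phi_\eps+k\,\bar\psi_\eps,
\end{equation*}
where $\bar\phi_\eps:=\aint_0^\eps\phi^{u_\eps}\,\d y$ and $\bar\psi_\eps:=\aint_0^\eps\phi^{u_\eps}\cos(\theta_Y^{u_\eps}+\theta_g^{u_\eps})\sqrt{1+(\partial_y\phi^{u_\eps})^2}\,\d y$. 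The standing assumption $h\leq 0$ gives $|\phi^{u_\eps}|\leq\eps\|h\|_\infty$, and \eqref{e:assump} bounds the weighted average, so the right-hand side is $O(\eps\|h\|_\infty)$, which upon suitable constant bookkeeping delivers the claimed $E(u_0)\geq C_\eps-\eps\|h\|_\infty$.

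The main obstacle is the bookkeeping of the constant in the lower bound: a naive estimate produces a factor of $\sqrt{1+k^2}+|k|\nu$ multiplying $\eps\|h\|_\infty$, which must be absorbed using $|\cos\theta_a|\leq\nu<1$ (which bounds $k$ in a dimensionless way) and, if needed, refining the competitor by tilting $\tilde u_0$ only inside a thin boundary layer near $\mathcal{L}_p^{u_\eps}$ to cancel the geometric mismatch between $D^{u_\eps}$ and the straight strip $(0,1)\times(0,\eps)$. The other two parts ((a), (b), and the upper bound) are essentially one-line verifications once the Euler--Lagrange system is written down.
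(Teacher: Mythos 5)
Your decomposition (Euler--Lagrange plus strict convexity for uniqueness, direct evaluation for the upper bound, a competitor in the auxiliary problem \eqref{e:aux} for the lower bound) is exactly the paper's strategy, and parts (a), (b) and the upper bound are fine as you have them. The one genuine issue is the choice of competitor in the lower bound. The paper does not extend $u_0$ linearly across the sliver $\{\phi^{u_\eps}(y)<x<0\}$; it extends an arbitrary $v\in\mathrm{V}_1$ by the \emph{constant} $v(0)$ there:
\begin{equation*}
\tilde{v}(x,y):=v(x)\ \text{for } 0\le x\le 1,\qquad \tilde{v}(x,y):=v(0)\ \text{for } \phi^{u_\eps}(y)<x<0 .
\end{equation*}
With this choice $\tilde{v}(\phi^{u_\eps}(y),y)=v(0)$ exactly, so the line-tension term of $\mathcal{E}^{u_\eps}(\tilde v)$ is exactly $-v(0)\cos\theta_a$ with no remainder, and the only excess over $E(v)$ is the area of the sliver, $\aint_0^\eps|\phi^{u_\eps}|\,\d y\le\eps\|h\|_\infty$, since $|\nabla\tilde v|=0$ there. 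Hence $C_\eps\le\mathcal{E}^{u_\eps}(\tilde v)\le E(v)+\eps\|h\|_\infty$ for \emph{every} $v\in\mathrm{V}_1$, and taking $v=u_0$ gives the stated bound with constant exactly $1$.

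Your linear extension, by contrast, really does leave you with the factor $\sqrt{1+k^2}+|k|\nu$ that you identify, and the first remedy you suggest cannot close the gap: $|\cos\theta_a|\le\nu$ only gives $\sqrt{1+k^2}\le(1-\nu^2)^{-1/2}$, and $\sqrt{1+k^2}+|k|\nu\ge 1$ with equality only at $k=0$, so no amount of absorbing via $\nu$ recovers the constant $1$. (A partial cancellation between $-\sqrt{1+k^2}\,\bar\phi_\eps$ and $k\bar\psi_\eps$ does occur when $\cos(\theta_s^{u_\eps}+\theta_g^{u_\eps})\sqrt{1+(\partial_y\phi^{u_\eps})^2}$ is constant along the contact line, but only its average equals $\cos\theta_a$ in general, so this cannot be relied upon.) Your second remedy --- modifying the competitor in a boundary layer --- is the right instinct; carried out, it is precisely the constant continuation above, i.e.\ flattening rather than tilting. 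With that replacement your argument coincides with the paper's proof; as written, it proves the lemma only with a larger, $\nu$-dependent constant in place of $1$.
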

\begin{proof}
By convexity of $E(v)$, we know that $E(v)$ has a unique minimizer, satisfying the Euler-Lagrangian equation
\begin{align*}
\left\{
\begin{array}{ll}
\partial_x\Big(\frac{\partial_x v}{1+|\partial_x  v|} \Big)=0, & \hbox{ in } (0,1), \\
-\frac{\partial_x v}{\sqrt{1+|\partial_x v|^2}}=\cos\theta_a, & \hbox{ at  } x=0,\\
v(1)=0.
\end{array}
\right.
\end{align*}
It is easy to see that $u_0=k(1-x)$ with $k$ such that $\frac{k}{\sqrt{1+k^2}}=\cos\theta_a$ is a solution of the above equation. So it is the unique minimizer of $E(v)$ in 
$\mathrm{V}_1$. Furthermore, 
\[
E(u_0)=\sqrt{1+k^2}-k\cos\theta_a=\frac{1}{\sqrt{1+k^2}}\leq 1.
\]

In addition, for any $v\in \mathrm{V}_1$, define 
$$\tilde{v}(x,y):=\left\{ 
\begin{array}{ll}
v(x) & \hbox{if } (x,y)\in D^{u_\eps}, 0\leq x\leq 1\\
v(0) & \hbox{if } (x,y)\in D^{u_\eps},  x<0.
\end{array}
\right.
$$
Then $\tilde{v}\in\mathrm{V}$, and we have
\begin{align*}
\mathcal{E}^{u_\eps}(\tilde{v})&=\aint_0^\eps\int_{\phi^{u_\eps}}^1\sqrt{1+(\partial_x \tilde{v})^2}\d x\d y
-\aint_0^\eps \sqrt{1+(\partial_y\phi^{u_\eps})^2} v(0)\cos(\theta_Y^{u_\eps}+\theta_g^{u_\eps})\d y\\
&\leq \int_0^1\sqrt{1+(\partial_x v)^2}\d x+\|\phi^{u_\eps}\|_{\infty} -v(0)\cos\theta_a\\
&\leq  E(v)+\eps\|h \|_{\infty}.
\end{align*}
We then have that 
$E(u_0)\geq \inf_{v}\mathcal{E}^{u_\eps}(v)-\eps\|h\|_{\infty}$.
\end{proof}

Using these lemmas, we are ready to prove the main theorem.

{\it Proof of Theorem~\ref{theo:main}.}
By Lemma~\ref{lem1} and Lemma~\ref{lem4}, we know that $u_\eps$ is the unique minimizer of $\mathcal{E}_\eps$ in $\mathrm{V}$,
and $u_0$ is the unique minimizer of $E(v)$ in $\mathrm{V}_1$.

Notice that 
\begin{align}
\|u_\eps(\cdot,y)-u_0\|_{L^1(0,1)}&\leq \|u_\eps(\cdot,y)-\bar{u}_\eps\|_{L^1(0,1)}+\|\bar{u}_\eps -u_0\|_{L^1(0,1)}
\nonumber\\
&\leq \frac{\mathcal{E}_\eps(u_\eps)}{1-\nu}\eps+\|\bar{u}_\eps -u_0\|_{L^1(0,1)}\nonumber\\
&\leq \frac{1+\eps\|h\|_{\infty}}{1-\nu}\eps+\max_{x\in(0,1)}\big|\bar{u}_\eps -u_0\big|.\label{e:theo_temp1}
\end{align}
where we use Lemma~\ref{lem2} and Lemma~\ref{lem4}.
In the following, 
we need only prove \eqref{e:est1}, i.e. to estimate $\max_{x\in(0,1)}\big|\bar{u}_\eps -u_0\big|\leq C_2\eps$, since the equation
 implies \eqref{e:est2} by setting $C_1=C_2+\frac{1+\eps\|h\|_{\infty}}{1-\nu}$.
 
By Lemma~\ref{lem3} and Lemma~\ref{lem4}, we have 
\begin{align*}
E(u_0)\leq E(\bar{u}_\eps)&\leq \mathcal{E}^{u_\eps} (u_\eps)+\frac{2\nu}{1-\nu}(1+\|h\|_{\infty})\eps\\
&\leq E(u_0) +\frac{2\nu}{1-\nu}(1+\|h\|_{\infty})\eps+\eps\|h\|_{\infty}\leq E(u_0) 
+\frac{1+\nu}{1-\nu}(1+\|h\|_{\infty})\eps.
\end{align*}
So
\begin{align*}
0\leq E(\bar{u}_\eps)-E(u_0)\leq \frac{1+\nu}{1-\nu}(1+\|h\|_{\infty})\eps.
\end{align*}
Notice $\bar{u}_\eps$ and $u_0$ are both continuous functions in $(0,1)$. Let $x^*\in(0,1)$ 
be the point where $|\bar{u}_\eps(x)-u_0(x)|$ attains its maximum value. Define a piecewise 
linear function $w(x)$ satisfies 
\begin{align*}
w(0)=\bar{u}_\eps(0), \qquad w(1)=\bar{u}_\eps(1), \qquad w(x^*)=\bar{u}_\eps(x^*).  
\end{align*}
Then we easily have
\begin{align*}
E(u_0)\leq E(w)\leq E(\bar{u}_\eps).
\end{align*}
This gives
\begin{align*}
0\leq E({w})-E(u_0)\leq \frac{1+\nu}{1-\nu}(1+\|h\|_{\infty})\eps,
\end{align*}
and leads to $|w(x^*)-u_0(x^*)|\leq C_2\eps$, with 
$C_2=\frac{1+\nu}{1-\nu}(1+\|h\|_\infty)\eps$. This implies \eqref{e:est1}.
$\square$

\section{General discussions}
In this section, we  discuss  how to use  the modified Wenzel and Cassie equations (or the 
formula~\eqref{e:ModifiedWC}) in practice. 
In equation~\eqref{e:ModifiedWC}, the position of 
the contact line is not known a priori, since
it is usually difficult to predict which local minimizer a real system will finally arrive at.
However, there  are  some possible ways to use the formula. Firstly,
it is possible that the position of the contact line can be experimentally
determined\cite{erbil2014debate}. In this case, one can use the equation
directly. Secondly, in many cases, people may be only interested in 
 contact angle hysteresis, i.e.
the largest and the smallest apparent contact angles
in the system. 
Then, one may use our formula to obtain the two angles by
checking for some possible contact lines\cite{XuWang2013}.
In the following, we will show this by a few simple examples.

We first consider a two-dimensional drop on a solid surface.  
In this case, the contact line becomes a point and the equation \eqref{e:ModifiedWC} reduces to
\begin{equation}\label{e:2D}
\theta_{a}=\theta_s(x_{ct})-\theta_g(x_{ct}), 
\end{equation}
where $x_{ct}$ is the contact point.
We will use this 
equation to quantify the contact angle hysteresis in the system.
For simplicity, we suppose the solid surface is either chemically patterned (see Fig.~\ref{fig:chem_CAH}) 
or geometrically rough (see Fig.~\ref{fig:rough_CAH}).
In the former case, the solid surface is composed by two materials,
with different Young's angles $\theta_{Y1}$ and $\theta_{Y2}$ ($\theta_{Y1}>\theta_{Y2}$). Since the solid
surface is flat, the geometric angle $\theta_g$ in equation \eqref{e:2D} is $0$.
The equation implies that the apparent contact angle 
$\theta_a$ is either $\theta_{Y1}$ or  $\theta_{Y2}$ depending 
on the location the contact point. By the definition of contact angle hysteresis,
we easily see that the largest apparent contact angle(or the advancing angle) is $\theta_{Y1}$
and the smallest contact angle(or the receding angle) is $\theta_{Y2}$, as shown in Fig.~\ref{fig:chem_CAH}.
In geometrically rough surface case,  Young's angle $\theta_Y$ is a constant.
The equation \eqref{e:2D} implies that the apparent contact angle is $\theta_Y$ minus a geometric 
angle, which by definition is equal to the angle between the tangential line of the solid surface and the horizontal
effective surface. The geometric angle may be positive or negative depending on the relative position of the tangential line with 
respect to the horizontal surface. By careful computations, we could 
see that the largest apparent contact angle is $\theta_Y+\theta_{g1}$ and the smallest one is $\theta_Y-\theta_{g2}$,
where $\theta_{g1}$ and $\theta_{g2}$ are positive numbers, as shown in Fig.~\ref{fig:rough_CAH}.
{
  \begin{figure}[htb!]
\vspace*{-5mm}
    \centering
    {\includegraphics[height=5.1cm]{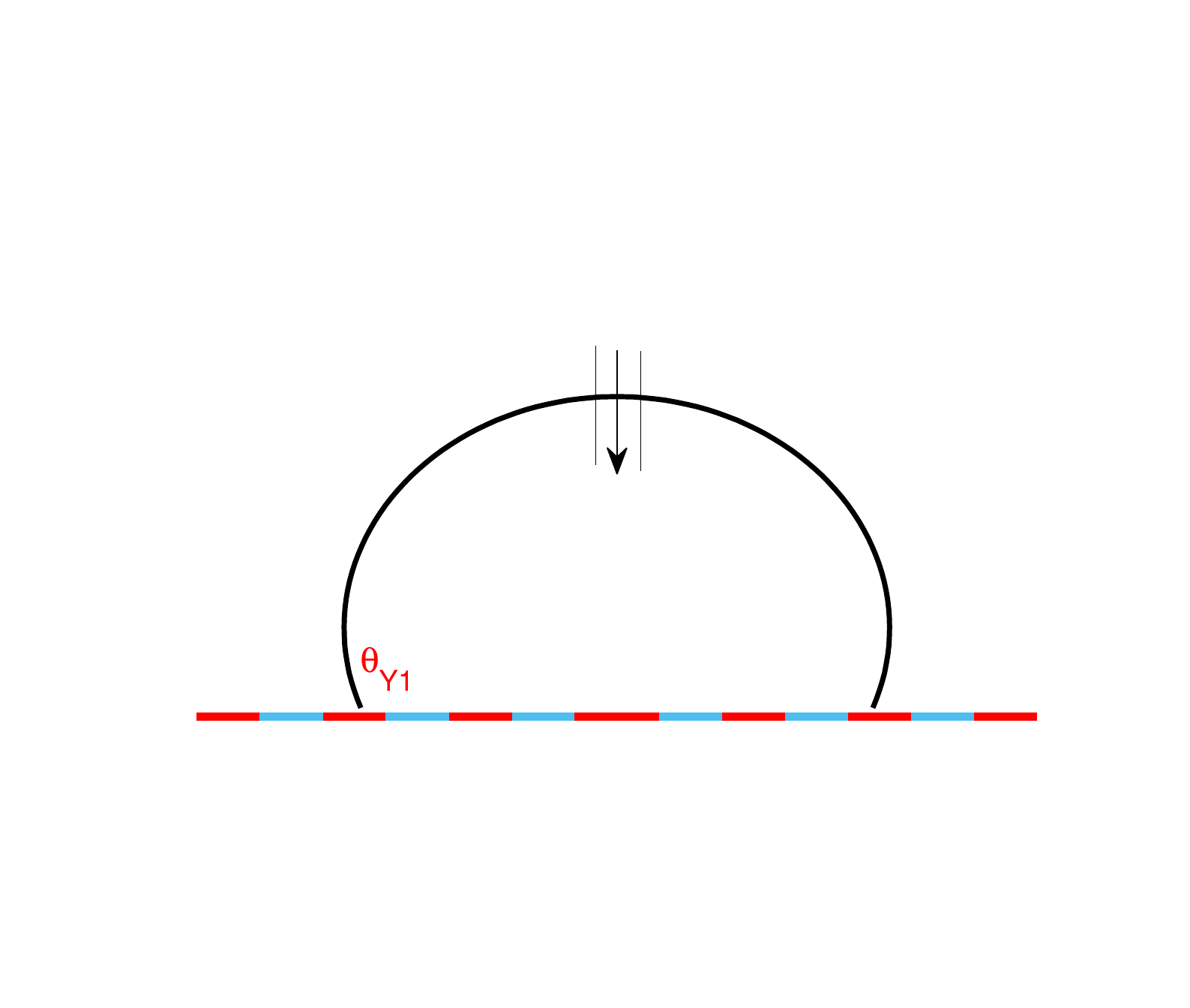}}
    {\includegraphics[height=5.3cm]{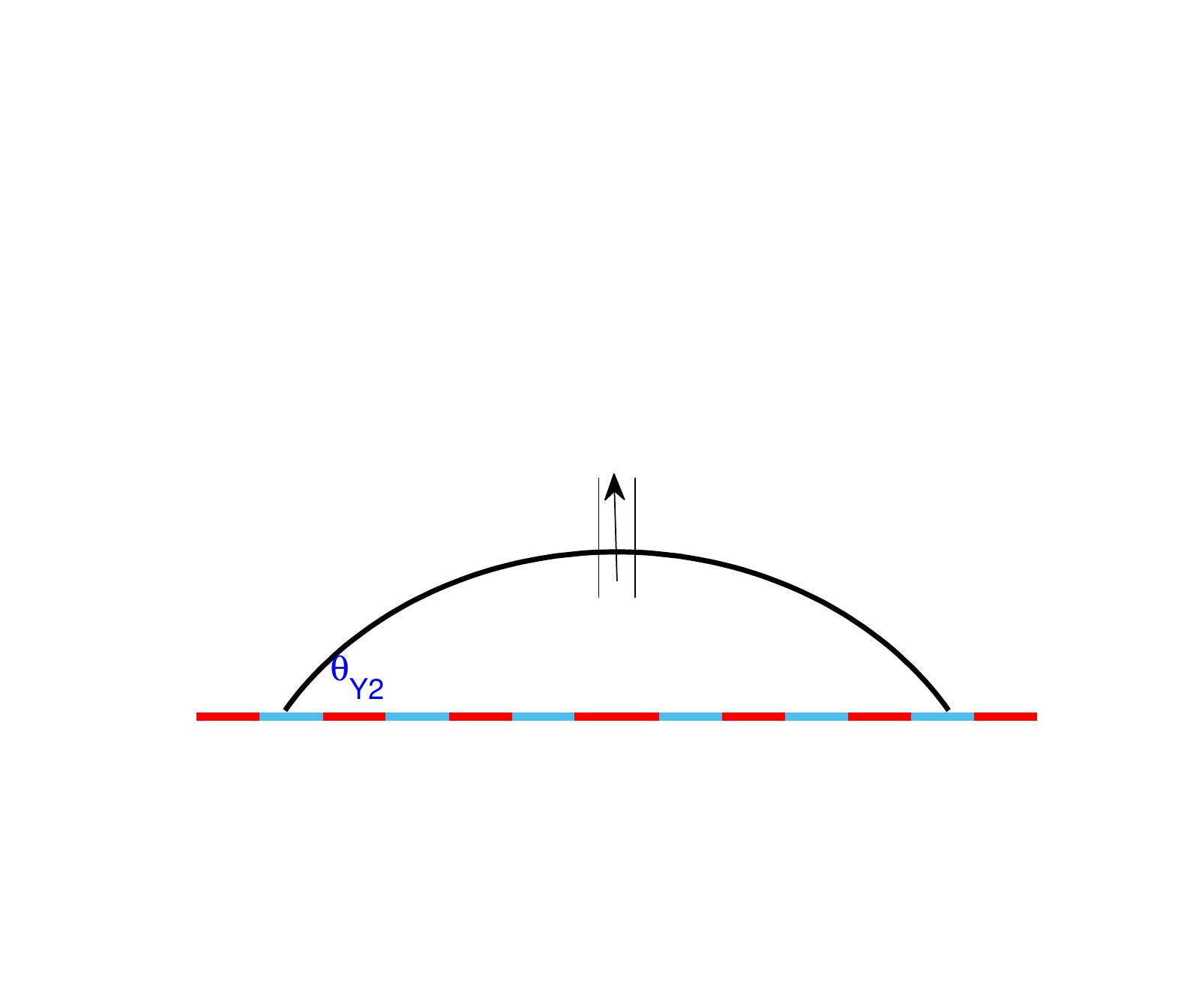}}
    \vspace*{-10mm}
    \caption{The advancing and receding contact angles on a chemically patterned surface.}    \lbl{fig:chem_CAH}
 \end{figure}
  \begin{figure}[htb!]
\vspace*{-5mm}
    \centering
    {\includegraphics[height=5.cm]{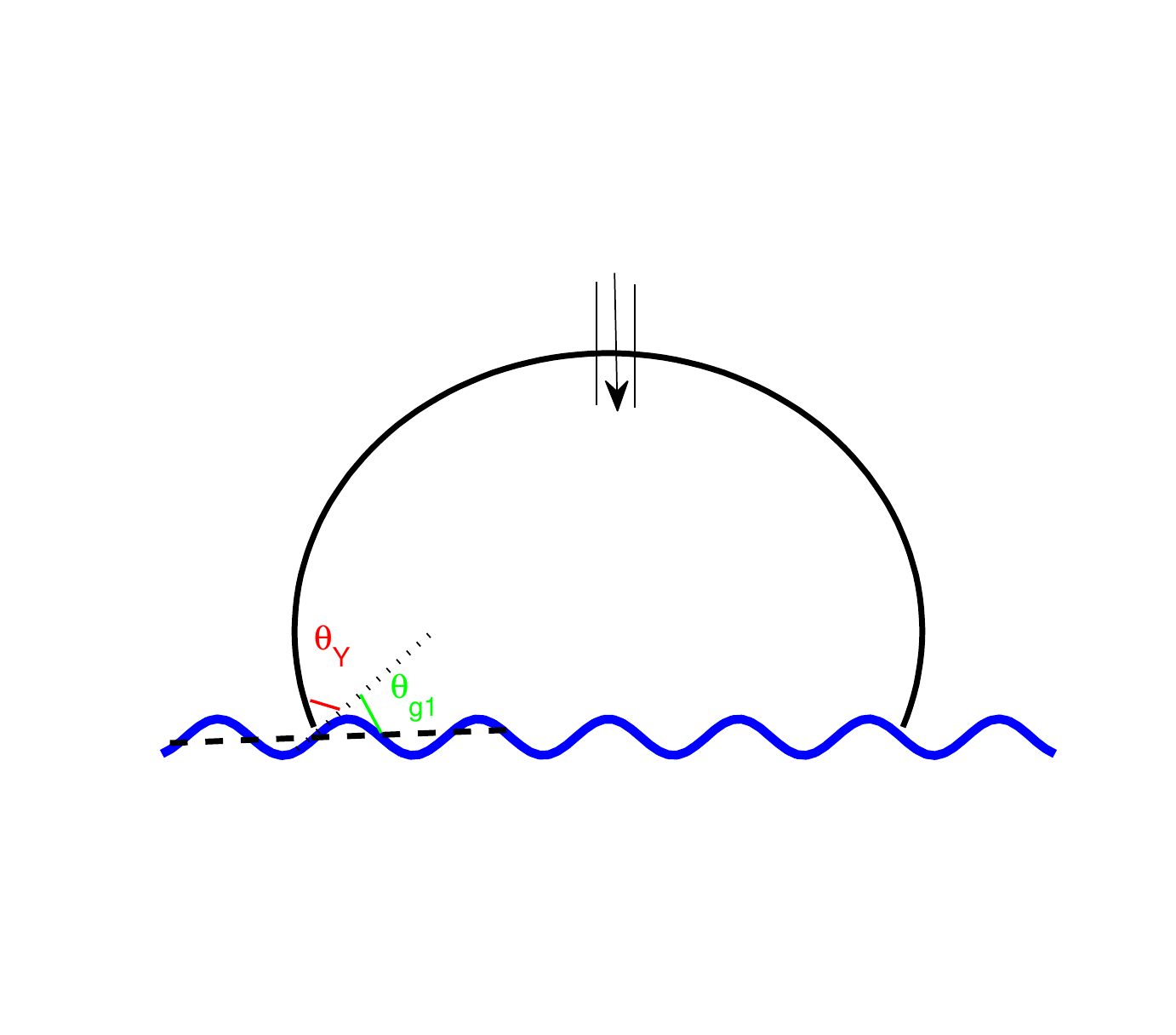}}
    {\includegraphics[height=5.3cm]{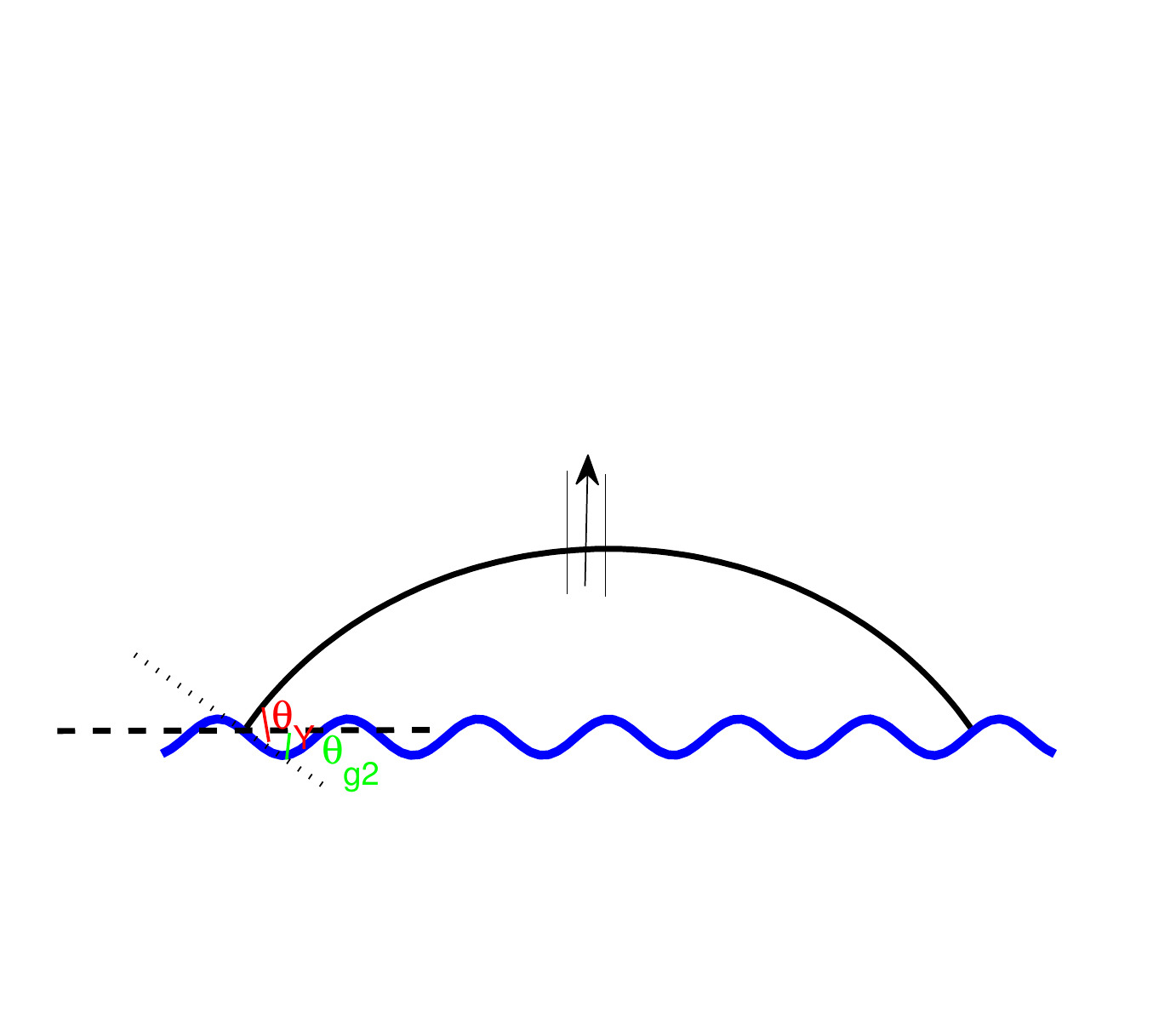}}
    \vspace*{-10mm}
    \caption{The advancing and receding contact angles on a geometrically rough surface.}    \lbl{fig:rough_CAH}
 \end{figure}
 }

We then consider a three-dimensional problem with chemically patterned surface
 as shown in Fig.~\ref{fig:chem_CAL}. The basis is made of a material with Young's angle $\theta_{Y1}$ and the 
patterns correspond to  Young's angle $\theta_{Y2}$ ($\theta_{Y1}>\theta_{Y2}$).
When a liquid drop is put in the center area of the surface,
there might be many contact lines.
Notice the equation \eqref{e:ModifiedWC}  reduces to
\begin{equation*}
\cos \theta_a=\aint_{CL}\cos\theta_Y ds\approx\lambda\cos\theta_{Y1}+(1-\lambda)\cos\theta_{Y2},
\end{equation*}
where $\lambda$  is the length fraction of the contact line in material 1.
This implies that different contact lines may correspond to different apparent angles since
$\lambda$ may change.
Nevertheless, we easily see that the largest apparent 
contact angle is $\theta_{Y1}$ corresponding to a contact line located entirely 
in material 1 (as shown in the left sub-figure of Fig.~\ref{fig:chem_CAL}),
and the smallest one is given by
$$\cos\theta_{rec}=\frac{r}{\eps}\cos\theta_{Y1}+(1-\frac{r}{\eps})\cos\theta_{Y2},$$
corresponding to a contact line periodically crossing the patterns (as shown  in the right sub-figure of Fig.~\ref{fig:chem_CAL}). They
are the advancing and receding contact angles, respectively.
  \begin{figure}[htb!]
\vspace*{-1mm}
    \centering
    {\includegraphics[height=6.cm]{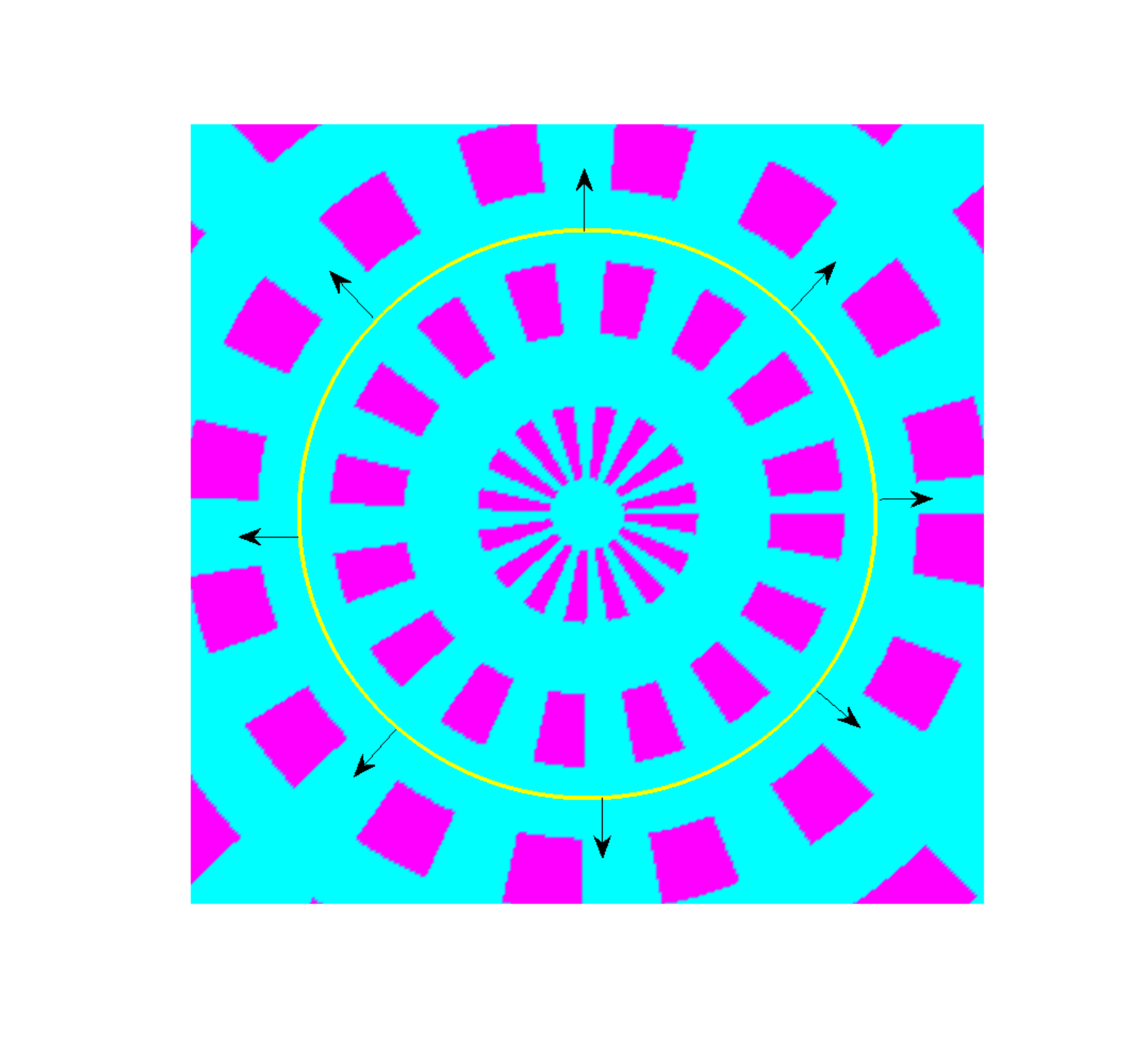}}
    {\includegraphics[height=6.cm]{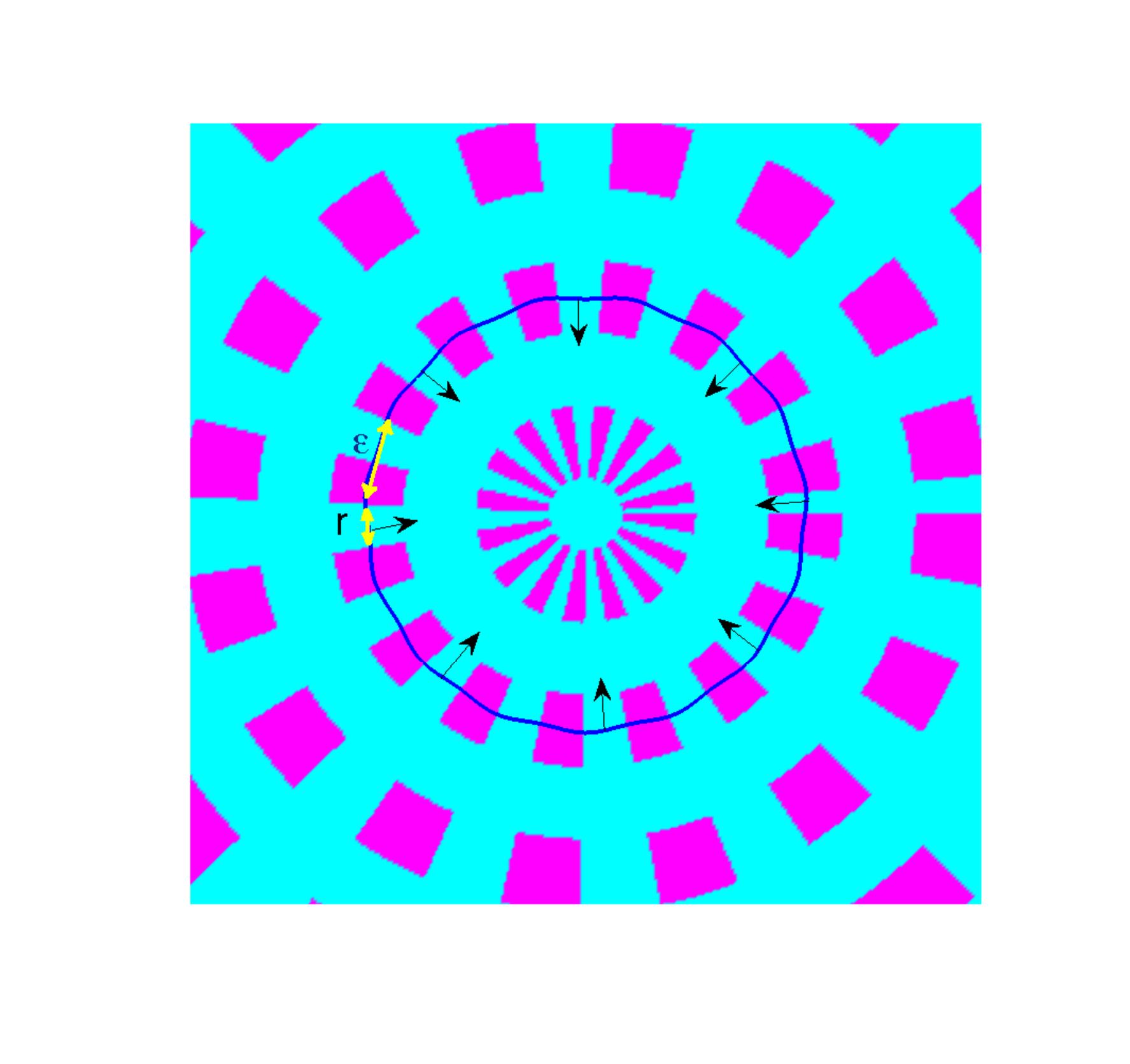}}
    \vspace*{-10mm}
    \caption{The advancing and receding contact lines on a chemically patterned surface.}    \lbl{fig:chem_CAL}
 \end{figure}

For a geometrically rough surface in three dimension, the situation is much more complicated 
than the previous case, since the geometric angle varies on a contact line and the coefficient $\sqrt{1+|\partial_y\phi_\eps|^2}$
 now has an effect. In general,
it is very difficult to compute the apparent contact angle analytically. A numerical computation
might be useful\cite{xuWangWang2016}. However, in some special situation, as shown in Fig.~\ref{fig:rough_CAL},
the solid surface is made rough by periodic pillars with flat tops.
If we assume there is air trapped between the pillars under the liquid,
 the air can be considered as a material with Young's angle $\pi$\cite{Choi09}. 
Then, for the contact line in Fig.~\ref{fig:rough_CAL}, the apparent contact  angle will be given by
$$\cos\theta_{a}=\frac{r}{\eps}\cos\theta_{Y}-(1-\frac{r}{\eps}).$$ 
This equation can be used to characterize the super-hydrophobicity  of
a textured rough surface.
  \begin{figure}[htb!]
\vspace*{0mm}
    \centering
    {\includegraphics[height=3.cm]{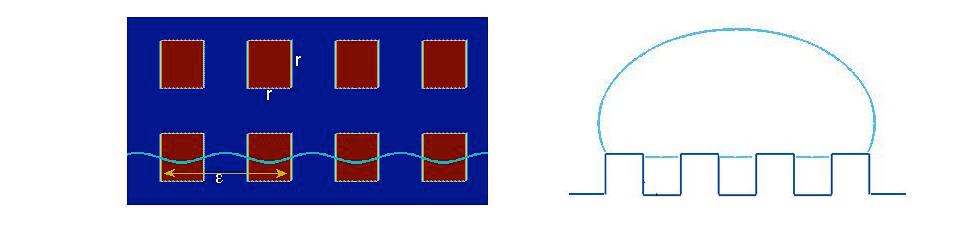}}
    \vspace*{-1mm}
    \caption{A rough surface with periodic pillars: the top view (left) and the side view (right).}    \lbl{fig:rough_CAL}
 \end{figure}

From these examples, we could see that the equation~\eqref{e:ModifiedWC} is quite general
and can be useful in many cases. For example, the equation can be used to understand many experimental results\cite{Raj12,XuWang2013}.
The key information from the equation is that both the geometric
and chemical properties must be averaged on a contact line to give the correct value for
the apparent contact angle. This is not known in literature.
However, there are some restrictions in usage of the formula. Firstly, the equation \eqref{e:ModifiedWC} relies on a  knowledge 
of the position of the contact line, usually which is  not known a priori. In this case, we need some estimations or
 computations for the location of a contact line.  Secondly, the equation is derived from a stationary wetting problem. 
It might need to be adapted for dynamic problems where the apparent contact angle also depends on velocity\cite{guan2016asymmetric,WangXu2016}.

\section{Conclusion}
In this paper, we derive a new formula
for the macroscopic contact angle by a two-scale asymptotic homogenization approach.  The new equation implies that both the geometric
and chemical properties must be averaged on a contact line to give the correct value for
an apparent contact angle. The formula can be reduced to
a modified Wenzel equation for geometrically rough surface and a modified Cassie equation for 
chemically rough surfaces. Unlike the classical Wenzel and Cassie equations, the modified equations
correspond to local minimizers of the energy in the system and can be used to understand the important contact angle hysteresis phenomena.
We prove the homogenization result rigorously by a variational method. The difficulty to prove this result is that the solution of the original problem might not be unique. The key idea of our proof is to construct an auxiliary energy minimizing problem for each solution and to estimate the energy  
directly.  We also discuss how to use the new formula in practices, e.g. when studying the contact angle hysteresis phenomena.

\section*{Acknowledgments}
We thank Professor Xiaoping Wang from the Hong Kong University of Science and Technology for  helpful discussions.

\bibliographystyle{plain}
\bibliography{literW}

\begin{thebibliography}{10}

\bibitem{Alberti05}
G.~Alberti and A.~DeSimone.
\newblock Wetting of rough surfaces: A homogenization approach.
\newblock {\em Proc. R. Soc. A}, 451:79--97, 2005.

\bibitem{Bonn09}
D.~Bonn, J.~Eggers, J.~Indekeu, J.~Meunier, and E.~Rolley.
\newblock Wetting and spreading.
\newblock {\em Rev. Mod. Phys.}, 81:739--805, 2009.

\bibitem{Bormashenko09}
E.~Bormashenko.
\newblock A variational approach to wetting of composite surfaces: Is wetting
  of composite surfaces a one-dimensional or two-dimensional phenomenon?
\newblock {\em Langmuir}, 25:10451--10454, 2009.

\bibitem{caffarelli2007capillary1}
L.~A. Caffarelli and A.~Mellet.
\newblock Capillary drops on an inhomogeneous surface.
\newblock {\em Contemporary Mathematics}, 446:175--202, 2007.

\bibitem{Cassie44}
A.~Cassie and S.~Baxter.
\newblock Wettability of porous surfaces.
\newblock {\em Trans. Faraday Soc.}, 40:546--551, 1944.

\bibitem{chenWangXu2013}
X.~Chen, X.-P. Wang, and X.~Xu.
\newblock Effective contact angle for rough boundary.
\newblock {\em Physica D}, 242:54--64, 2013.

\bibitem{Choi09}
W.~Choi, A.~Tuteja, J.~M. Mabry, R.~E. Cohen, and G.~H. McKinley.
\newblock A modified cassie–baxter relationship to explain contact angle
  hysteresis and anisotropy on non-wetting textured surfaces.
\newblock {\em J. Colloid Interface Sci.}, 339:208--216, 2009.

\bibitem{Gennes85}
P.G. de~Gennes.
\newblock Wetting: Statics and dynamics.
\newblock {\em Rev. Mod. Phys.}, 57:827--863, 1985.

\bibitem{Gennes03}
P.G. de~Gennes, F.~Brochard-Wyart, and D.~Quere.
\newblock {\em Capillarity and Wetting Phenomena}.
\newblock Springer Berlin, 2003.

\bibitem{erbil2014debate}
H.~Y. Erbil.
\newblock The debate on the dependence of apparent contact angles on drop
  contact area or three-phase contact line: a review.
\newblock {\em Surface Science Reports}, 69(4):325--365, 2014.

\bibitem{Extrand02}
C.~W. Extrand.
\newblock Model for contact angles and hysteresis on rough and ultraphobic
  surfaces.
\newblock {\em Langmuir}, 18:7991--7999, 2002.

\bibitem{Gao07}
L.~Gao and T.~J. McCarthy.
\newblock How wenzel and cassie were wrong.
\newblock {\em Langmuir}, 23:3762--3765, 2007.

\bibitem{guan2016asymmetric}
D.~Guan, Y.~Wang, E.~Charlaix, and P.~Tong.
\newblock Asymmetric and speed-dependent capillary force hysteresis and
  relaxation of a suddenly stopped moving contact line.
\newblock {\em Physical review letters}, 116(6):066102, 2016.

\bibitem{Johnson1964}
R.~E. Johnson~Jr. and R.~H. Dettre.
\newblock Contact angle hysteresis. iii. study of an idealized heterogeneous
  surfaces.
\newblock {\em J. Phys. Chem.}, 68:1744--1750, 1964.

\bibitem{landaufluid}
L.~D. Landau and E.M. Lifshitz.
\newblock {\em Fluid mechanics}.
\newblock Fergamon Press, New York, 1987.

\bibitem{Marmur09}
A.~Marmur and E.~Bittoun.
\newblock When wenzel and cassie are right: Reconciling local and global
  considerations.
\newblock {\em Langmuir}, 25:1277--1281, 2009.

\bibitem{Priest2013}
C.~Priest, R.~Sedev, and J.~Ralston.
\newblock A quantitative experimental study of wetting hysteresis on discrete
  and continuous chemical heterogeneities.
\newblock {\em Colloid Polym. Sci.}, 291:271--277, 2013.

\bibitem{Quere08}
D.~Quere.
\newblock Wetting and roughness.
\newblock {\em Annu. Rev. Mater. Res.}, 38:71--99, 2008.

\bibitem{Raj12}
R.~Raj, R.~Enright, Y.~Zhu, S.~Adera, and E.~N. Wang.
\newblock Unified model for contact angle hysteresis on heterogeneous and
  superhydrophobic surfaces.
\newblock {\em Langmuir}, 28:15777--15788, 2012.

\bibitem{Schwarts1985}
L.~W. Schwartz and S.~Garoff.
\newblock Contact angle hysteresis on heterogeneous surfaces.
\newblock {\em Langmuir}, 1:219--230, 1985.

\bibitem{WangXu2016}
X.~Wang and X.~Xu.
\newblock A dynamic theory for contact angle hysteresis on chemically rough
  surface.
\newblock {\em Discrete and Continuous Dynamical Systems-A}, to appear, 2016.

\bibitem{Wenzel36}
R.~N. Wenzel.
\newblock Resistance of solid surfaces to wetting by water.
\newblock {\em Ind. Eng. Chem.}, 28:988--994, 1936.

\bibitem{Whyman08}
G.~Whyman, E.~Bormashenko, and T.~Stein.
\newblock The rigorous derivative of young, cassie-baxter and wenzel equations
  and the analysis of the contact angle hysteresis phenomenon.
\newblock {\em Chem. Phy. Letters}, 450:355--359, 2008.

\bibitem{Xu2016}
X.~Xu.
\newblock Analysis for wetting on rough surfaces by a three-dimensional phase
  field model.
\newblock {\em Discrete and Continuous Dynamical Systems-B}, to appear, 2016.

\bibitem{xuWangWang2016}
X.~Xu, D.~Wang, and X.-P. Wang.
\newblock An efficient threshold dynamics method for wetting on rough surfaces.
\newblock {\em arXiv:1602.04688}, 2016.

\bibitem{xu-wang10}
X.~Xu and X.~P. Wang.
\newblock Derivation of the wenzel and cassie equations from a phase field
  model for two phase flow on rough surface.
\newblock {\em SIAM J. Appl. Math.}, 70:2929--2941, 2010.

\bibitem{XuWang2011}
X.~Xu and X.~P. Wang.
\newblock Analysis of wetting and contact angle hysteresis on chemically
  patterned surfaces.
\newblock {\em SIAM J. Appl. Math.}, 71:1753--1779, 2011.

\bibitem{XuWang2013}
X.~Xu and X.~P. Wang.
\newblock The modified cassie's equation and contact angle hysteresis.
\newblock {\em Colloid Polym. Sci.}, 291:299--306, 2013.

\bibitem{Young1805}
T.~Young.
\newblock An essay on the cohesion of fluids.
\newblock {\em Philos. Trans. R. Soc. London}, 95:65--87, 1805.

\end{thebibliography}

\end{document}